\documentclass{article}
                            
\usepackage{amsmath}
\usepackage{amssymb}
\usepackage{amsthm}
\usepackage[english]{babel}
\usepackage{geometry}

\newtheorem{teo}{Theorem}[section]
\newtheorem{coro}[teo]{Corollary}
\newtheorem{oss}[teo]{Remark}
\newtheorem{defn}[teo]{Definition}
\newtheorem{lem}[teo]{Lemma}
\newtheorem{pro}[teo]{Proposition}
\newcommand{\B}{\mathbb{B}}
\newcommand{\HH}{\mathbb{H}}
\newcommand{\s}{\mathbb{S}}
\newcommand{\rr}{\mathbb{R}}
\newcommand{\N}{\mathbb{N}}

\DeclareMathOperator{\IIm}{Im}
\DeclareMathOperator{\RRe}{Re}
\DeclareMathOperator{\ext}{ext}
\newcommand{\de}{\partial_C}

\geometry{
  body={16cm, 20cm},
}
\title{\bf A Bloch-Landau Theorem for slice regular functions}

\author{Chiara Della Rocchetta \footnote{The three authors acknowledge the support of G.N.S.A.G.A. of INdAM and MIUR (Research Project ``Variet\`a reali e complesse: geometria, topologia e analisi armonica'')}\\
\normalsize Dipartimento di Matematica ``U. Dini'', Universit\`a di Firenze \\
\normalsize Viale Morgagni 67/A, 50134 Firenze, Italy, mchidel@gmail.com \\
\and Graziano Gentili \\
\normalsize Dipartimento di Matematica ``U. Dini'', Universit\`a di Firenze \\
\normalsize Viale Morgagni 67/A, 50134 Firenze, Italy,  gentili@math.unifi.it \\
\and Giulia Sarfatti\\
\normalsize Dipartimento di Matematica ``U. Dini'', Universit\`a di Firenze \\
\normalsize Viale Morgagni 67/A, 50134 Firenze, Italy,  sarfatti@math.unifi.it \\
}

\date{}
\begin{document}
\maketitle

\abstract{The Bloch-Landau Theorem is one of the basic results in the geometric theory of holomorphic functions. It establishes that the image of the open unit disc $\mathbb{D}$ under a holomorphic function $f$ (such that $f(0)=0$ and $f'(0)=1$) always contains an open disc with radius larger than a universal constant. In this paper we prove a Bloch-Landau type Theorem for slice regular functions over the skew field $\HH$ of quaternions. If $f$ is a regular function on the open unit ball $\B\subset \HH$, then for every $w \in \B$ we define the {\em regular translation} $\tilde f_w$ of $f$. 
 The peculiarities of the non commutative setting lead to the following statement: there exists a universal open set contained in the image of  $\B$ through some regular translation $\tilde f_w$ of any slice regular function $f: \B \to \HH$ (such that $f(0)=0$ and $\de f(0)=1$). For technical reasons, we introduce a new norm on the space of regular functions on open balls centred at the origin, equivalent to the uniform norm, and we investigate its properties.}

\date{}
\vskip 0.5 cm
{\bf Mathematics Subject Classification (2010): } 30G35, 30C99

{\bf keywords:} Functions of a quaternionic variable, Bloch-Landau Theorem.

\section{Introduction}

After the great development of the theory of holomorphic functions, there have been many attempts, and successes of different character, to build analogous theories of regular functions whose domain and range were the quaternions $\HH$. The most successful theory, by far, has been the one due to Fueter (see the foundation papers \cite{fueter1,fueter2}, the nice survey \cite{Sudbery}, the book \cite{libro daniele} and references therein). More recently, Cullen (see \cite{Cullen}) gave a definition of regularity that Gentili and Struppa reinterpreted and developed in \cite{gs, GSAdvances}, giving rise to the rich theory of slice regular functions.  
For this class of functions there have been proved the corresponding of several results of the complex setting, that often assume a different flavour in the quaternionic setting. For instance we can cite, among the most basic, results that concern the Cauchy Representation Formula and the Cauchy kernel, the Identity Principle, the Maximum Modulus Principle, the Open Mapping Theorem and a new notion of analyticity \cite{kernel, open,zeri, power, GSAdvances}.  Moreover, the theory of slice regular functions has been extended and generalized to the Clifford Algebras setting, and to the more general setting of alternative algebras, originating a class of functions that is also under deep investigation at the moment (see \cite{libro2} and references therein, and \cite{ghiloniperotti0, ghiloniperotti1}). 

\noindent An important property, that distinguishes slice regular functions from the Fueter regular ones, is that power series with quaternionic coefficients on the right, $\sum_{n=0}^{\infty}q^na_n$ are slice regular. Furthermore, on open balls centred at the origin, to have a power series expansion is a necessary and sufficient condition for a function to be slice regular. 
Since we will work with functions defined on open balls centred at the origin, we will make use of this characterization. 
Indeed, the purpose of this paper is to prove an analog of the Bloch-Landau Theorem for slice regular functions. In the complex case, this result  is an important fact in the study of the range of holomorphic functions defined on the  open unit disc $\mathbb{D}$. It states that the image of $\mathbb{D}$ through a holomorphic function cannot be ``too much thin''. In fact, under certain  normalizations, it contains always a disk with a universal radius. One of the first lower bounds, $\frac{1}{16}$,  of this constant is due to Landau, see the book \cite{Landau2}. The same author gave also better estimates in \cite{Landau}. Recently, the Bloch's Theorem in the quaternionic setting has been investigated with success in \cite{Morais}.

\noindent In our approach, since composition of slice regular functions is not regular in general, we have to define the set of {\em regular translations} of a regular function $f$ defined on the open unit ball $\mathbb{B}$ of the space of quaternions.
We then prove the existence of a universal open set $\mathcal{O}$, different from a ball, always contained in the image of the open unit ball $\B$ under some regular translation of any (normalized) slice regular function $f$. Thus we provide a further tool to the geometric theory of slice regular functions.

The paper is organized as follows: Section \ref{P} is dedicated to set up the notation and to give the preliminary results; in Section \ref{3} we prove an important property of the uniform norm on balls centred at the origin; in Section \ref{4} we define a new norm on the space of slice regular functions (on open balls centred at the origin) and we use it to set up a mean value theorem; the last section is devoted to prove a Bloch-Landau type Theorem for slice regular functions.

\section{Preliminaries}
\label{P}

Let $\HH$ denote the non commutative real algebra of quaternions with standard basis $\{1,i,j,k\}$. The elements of the basis satisfy the multiplication rules 
\[i^2=j^2=k^2=-1,\; ij=k=-ji,\; jk=i=-kj,\; ki=j=-ik,\] 
that extend by distributivity to all $q= x_0 +x_1i+x_2j+x_3k$ in $\HH$. 
Every element of this form is composed by the {\em real} part $\RRe(q)=x_0$ and the {\em imaginary} part  $\IIm(q)=x_1i +x_2j +x_3k$. The {\em conjugate} of $q\in\HH$ is then $\bar{q}=\RRe(q)-\IIm(q)$ and its {\em modulus} is defined as $|q|^2=q\bar{q}=\RRe(q)^2+|\IIm(q)|^2$. We can therefore calculate the multiplicative inverse of each $q\neq 0$ as $q^{-1}=\frac{\bar{q}}{|q|^2}$.
Notice that for all non real $q\in \HH$, the quantity $\frac{\IIm(q)}{|\IIm(q)|}$ is an imaginary unit, that is a quaternion whose square equals $-1$. Then we can express every $q\in \HH$ as $q=x+yI$, where $x,y$ are real (if $q\in\rr$, then $y=0$) and $I$ is an element of the unit $2$-dimensional sphere of purely imaginary quaternions,
$$\mathbb{S}=\{q\in \HH \ | \ q^{2}=-1 \}.$$
In the sequel, for every $I\in \mathbb{S}$ we will define $L_I$ to be the plane $\mathbb{R}+\mathbb{R}I$, isomorphic to $\mathbb{C}$, and, if $\Omega$ is a subset of $\HH$, by $\Omega_I$ the intersection $\Omega \cap L_I$.
Also, for $R >0$, we will denote the open ball centred at the origin with radius $R$ by
\[
B(0,R)=\{q\in \HH | \ |q|<R \}.
\]
We can now recall the definition of slice regularity.
\begin{defn}
A function $f: B=B(0,R) \rightarrow \HH$ is {\em slice regular} if, for all $I\in \mathbb{S}$,  its restriction $f_I$ to $B_I$  is {\em holomorphic}, that is  it has continuous partial derivatives and it satisfies
$$\overline{\partial_I} f(x+yI):=\frac{1}{2}\Big(\frac{\partial}{\partial x}  +I\frac{\partial}{\partial y}\Big)f_I(x+yI)=0$$
for all $x+yI \in B_I$.
\end{defn}
\noindent In the sequel, we will avoid the prefix slice when referring to slice regular functions.
 
\noindent For regular functions we can give the following natural definition of derivative.
\begin{defn}
Let $f: B(0,R) \rightarrow \HH$ be a regular function. The {\em slice derivative} (or {\em Cullen derivative}) of $f$ at $q=x+yI$ is defined as
\begin{equation*}
\partial_C f(x+yI)= \frac{\partial}{\partial x} f(x+yI).
\end{equation*}
\end{defn}
\noindent We remark that this definition is well posed because it is applied only to regular functions. Moreover,
since the operators $\partial_C$ and $\overline{\partial_I}$ do commute, the slice derivative of a regular function is still regular. Hence, we can iterate the differentiation obtaining (see for instance  \cite{GSAdvances}),
\[\partial_C^n f= \frac{\partial^n}{\partial x^n}f \quad \text{for any} \quad n \in \mathbb{N}.\]
 
\noindent As stated in \cite{GSAdvances}, a quaternionic power series $\sum_{n\geq 0}q^na_n$ with $\{a_n\}_{n\in \mathbb{N}}\subset \HH$ defines a regular function in its domain of convergence, which is a ball $B(0,R)$ with $R$ equal to the radius of convergence of the power series. Moreover, in \cite{GSAdvances}, it is also proved that
\begin{teo}
A function $f$ is regular on $B=B(0,R)$ if and only if $f$ has a power series expansion
$$f(q)=\sum_{n \geq 0}q^na_n \quad\text{with} \quad a_n=\frac{1}{n!}\frac{\partial^n f}{\partial x^n}(0).$$
\end{teo}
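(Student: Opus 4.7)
The plan is to prove the two directions separately. The forward implication (a power series representation yields a regular function) proceeds by first checking that each monomial $q \mapsto q^n a_n$ is regular: restricting to $L_I$ with $q = x + yI$, a direct computation gives $\overline{\partial_I}(z^n a_n) = \tfrac{n}{2}(z^{n-1} + I z^{n-1} I)\, a_n = 0$ because $I$ commutes with $z^{n-1} \in L_I$ and $I^2 = -1$. Uniform convergence of $\sum q^n a_n$ on compact subsets of $B(0,R)$, together with the corresponding uniform convergence of the term-by-term slice derivatives, then gives $\overline{\partial_I} f \equiv 0$ on each $B_I$. Differentiating along the real axis and evaluating at $0$ returns $\partial_C^n f(0) = n!\, a_n$, yielding the stated formula.

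For the converse, I would use a splitting-type argument. Fix $I \in \mathbb{S}$ and choose $J \in \mathbb{S}$ with $IJ = -JI$, so that $\HH$ decomposes as $L_I \oplus L_I J$. The restriction $f_I \colon B_I \to \HH$ then splits uniquely as $f_I = F + GJ$ with $F, G \colon B_I \to L_I$. Since $J$ is constant and multiplied on the right, one has $\overline{\partial_I}(GJ) = (\overline{\partial_I} G)\, J$, so $0 = \overline{\partial_I} f_I = \overline{\partial_I} F + (\overline{\partial_I} G)\, J$; the direct sum decomposition forces both summands to vanish. Hence $F$ and $G$ are classical $L_I$-valued holomorphic functions on the planar disc $B_I$ of radius $R$, and admit Taylor expansions $F(z) = \sum z^n \alpha_n$, $G(z) = \sum z^n \beta_n$ converging on $B_I$ with $\alpha_n, \beta_n \in L_I$. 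Setting $a_n^{(I)} := \alpha_n + \beta_n J \in \HH$ gives $f_I(z) = \sum z^n a_n^{(I)}$ on $B_I$, with radius of convergence at least $R$ by standard estimates on $|\alpha_n|$ and $|\beta_n|$.

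It remains to identify the coefficients and glue across slices. Differentiating the expansion $n$ times along the real axis $z = x \in \mathbb{R} \subset L_I$ and evaluating at the origin yields $a_n^{(I)} = \tfrac{1}{n!}\,\partial^n f/\partial x^n(0) = \tfrac{1}{n!}\,\partial_C^n f(0)$, which is manifestly independent of $I$; denote this common value by $a_n$. The series $g(q) := \sum q^n a_n$ has radius of convergence at least $R$, so by the forward direction it is regular on $B(0, R)$ and agrees with $f$ on every slice $B_I$; hence $g \equiv f$ on all of $B(0,R)$. I expect the delicate point to be the splitting step, specifically the verification that $\overline{\partial_I}$ respects the decomposition $\HH = L_I \oplus L_I J$, since this is what cleanly reduces the problem to a pair of classical complex Taylor expansions; once this reduction is in hand, the remaining arguments are essentially adaptations of one-variable complex analysis.
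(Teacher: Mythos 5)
The paper does not actually prove this theorem---it is quoted from \cite{GSAdvances} as a known result---so there is no internal proof to compare against; judged on its own, your argument is correct. It follows the standard route of the cited source: your decomposition $\HH = L_I \oplus L_I J$ together with the observation that $\overline{\partial_I}$ preserves it is precisely the paper's Splitting Lemma \ref{split}, after which the reduction to two classical Taylor expansions, the coefficient identification via derivatives along the real axis, and the gluing across slices all go through as you describe.
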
  
\noindent A fundamental result in the theory of regular functions, that relates slice regularity and classical holomorphy, is the following, \cite{GSAdvances}:
\begin{lem}[Splitting Lemma]\label{split}
If $f$ is a regular function on $B=B(0,R)$, then for every $I \in \mathbb{S}$ and for every $J \in \mathbb{S}$, $J$ orthogonal to $I$, there exist two holomorphic functions $F,G:B_I \rightarrow L_I$, such that for every $z=x+yI \in B_I$, it holds
$$f_I(z)=F(z)+G(z)J.$$
\end{lem}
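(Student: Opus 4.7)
The plan is to exploit the fact that since $I$ and $J$ are orthogonal elements of $\s$, the set $\{1, I, J, IJ\}$ forms an orthonormal $\rr$-basis of $\HH$. This yields a direct sum decomposition $\HH = L_I \oplus L_I J$ as real vector spaces, because every quaternion can be uniquely written as $(a+bI) + (c+dI)J$ with $a,b,c,d \in \rr$. For each $z \in B_I$, I would apply this splitting pointwise to $f_I(z)$, thereby defining functions $F, G : B_I \to L_I$ (unique by uniqueness of the decomposition) such that $f_I(z) = F(z) + G(z)J$. Because $f_I$ is continuously differentiable, so are $F$ and $G$, as they are $\rr$-linear projections of $f_I$ onto the two summands.

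The next step is to verify that $F$ and $G$ satisfy the Cauchy-Riemann equations on $B_I \cong \mathbb{C}$. Expanding the regularity condition $\overline{\partial_I} f_I = 0$ on the decomposition, I would compute
\[ \overline{\partial_I}(F + GJ) = \frac{1}{2}\bigl[(\partial_x F + I\partial_y F) + (\partial_x G + I\partial_y G)J\bigr], \]
where the key observation is that $I\,\partial_y(GJ) = (I\,\partial_y G)J$ by associativity, and that $I\,\partial_y G$ still lies in $L_I$ since $G$ does. Now $\partial_x F + I\partial_y F \in L_I$ while $(\partial_x G + I\partial_y G)J \in L_I J$, and $L_I \cap L_I J = \{0\}$ because $J \notin L_I$. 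Hence the vanishing of $\overline{\partial_I} f_I$ forces both summands to vanish separately, yielding
\[ \partial_x F + I\,\partial_y F = 0, \qquad \partial_x G + I\,\partial_y G = 0. \]
Under the identification $L_I \cong \mathbb{C}$ sending $I$ to $i$, these are precisely the Cauchy-Riemann equations for $F$ and $G$, so both are holomorphic from $B_I$ to $L_I$.

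The only delicate point is the non-commutative bookkeeping when moving $I$ past $G$ and $J$; but since the values of $G$ lie in $L_I$ they commute with $I$, and associativity handles the rest, so I do not anticipate a genuine obstacle beyond careful indexing.
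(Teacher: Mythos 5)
Your argument is correct and is essentially the standard proof of the Splitting Lemma (the paper itself quotes this result from the literature without reproving it): decompose $\HH = L_I \oplus L_I J$, project $f_I$ onto the two summands, and use the directness of the sum to split $\overline{\partial_I} f_I = 0$ into the two Cauchy--Riemann systems for $F$ and $G$. The key points --- that $I\,\partial_y G \in L_I$ because $L_I$ is closed under multiplication by $I$, and that $L_I \cap L_I J = \{0\}$ --- are exactly the ones that make the argument work, so there is nothing to add.
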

\noindent The following version of the Identity Principle is one of the first consequences, (as shown in  \cite{GSAdvances}):
\begin{teo}[Identity Principle]\label{Id}
Let $f$ be a regular function on $B=B(0,R)$. Denote by $Z_f$ the zero set of $f$, $Z_f=\{ q \in B | \, f(q)=0 \}$. If there exists $I \in \mathbb{S}$ such that $B_I \cap  Z_f$ has an accumulation point in $B_I$, then
$f$ vanishes identically on $B$.
\end{teo}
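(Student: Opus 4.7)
The plan is to reduce to the classical identity principle for holomorphic functions via the Splitting Lemma, and then propagate the vanishing from one slice to the whole ball $B$ by means of the power series expansion at the origin.

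First I would fix the imaginary unit $I \in \s$ given by the hypothesis, choose any $J \in \s$ orthogonal to $I$, and apply the Splitting Lemma to obtain holomorphic functions $F, G \colon B_I \to L_I$ with
\[
f_I(z) = F(z) + G(z) J \qquad \text{for every } z \in B_I.
\]
Since $\{1, I, J, IJ\}$ is an $\mathbb{R}$-basis of $\HH$, the real subspaces $L_I$ and $L_I J$ intersect only at $0$; consequently the equation $F(z) + G(z)J = 0$ forces $F(z) = 0$ and $G(z) = 0$. Hence $B_I \cap Z_f \subseteq Z_F \cap Z_G$, so both $F$ and $G$ are holomorphic functions on the planar disc $B_I \subset L_I \cong \mathbb{C}$ whose zero sets accumulate at a point of $B_I$.

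Next I would invoke the classical complex identity principle on the connected open disc $B_I$ to conclude $F \equiv 0$ and $G \equiv 0$ on $B_I$, so that $f_I \equiv 0$ on $B_I$. In particular, $f$ vanishes on the real segment $(-R, R) \subset B_I$, so every real derivative $\frac{\partial^n f}{\partial x^n}(0)$ is zero for $n \geq 0$.

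Finally, the power series representation $f(q) = \sum_{n \geq 0} q^n a_n$ with $a_n = \frac{1}{n!} \frac{\partial^n f}{\partial x^n}(0)$, valid on $B$ by the previously stated theorem, then forces $a_n = 0$ for every $n \in \mathbb{N}$ and therefore $f \equiv 0$ on $B$. There is no serious obstacle in this argument: the only conceptual step is the orthogonal decomposition coming from the Splitting Lemma, which converts the single quaternionic identity $f_I \equiv 0$ into two independent complex identities, after which the proof is a direct transcription of the classical one and the power series expansion at $0$ takes care of the extension from the slice to the full ball.
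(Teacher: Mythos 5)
Your proof is correct and is essentially the standard argument for this result (the one given in the reference \cite{GSAdvances} that the paper cites, since the paper itself only quotes the statement): split $f_I$ into two holomorphic components, observe that $L_I$ and $L_IJ$ meet only at $0$ so the zeros of $f_I$ are common zeros of $F$ and $G$, apply the classical identity principle on the disc $B_I$, and then kill all the power series coefficients via the derivatives at the origin. No gaps.
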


\noindent Another useful result is the following (see \cite{kernel, ext})
\begin{teo}[Representation Formula]\label{RF}
Let $f$ be a regular function on  $B=B(0,R)$ and let $J\in \mathbb{S}$. Then, for all $x+yI \in B$, the following equality holds
\begin{equation*}
f(x+yI)=\frac{1}{2}\big[ f(x+yJ)+f(x-yJ)\big]+I\frac{1}{2}\big[J\big[f(x-yJ)-f(x+yJ) \big]\big].
\end{equation*}
In particular for each sphere of the form $x+y\s$ contained in $B$, there exist $b, c \in \HH$ such that $f(x+yI)=b+Ic$\;  for all $I\in \s$ . 
\end{teo}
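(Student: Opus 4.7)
The plan is to exploit the power series expansion $f(q) = \sum_{n \geq 0} q^n a_n$, which is available on $B(0,R)$ by the theorem preceding the Splitting Lemma, and to verify the claimed identity term by term.

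The key algebraic observation is the following, proved by a one-line induction on $n$ using $K^2 = -1$: for any $x, y \in \mathbb{R}$ and any imaginary unit $K \in \mathbb{S}$,
\[
(x + yK)^n = \alpha_n(x,y) + K\, \beta_n(x,y),
\]
where $\alpha_n, \beta_n$ are \emph{real} polynomials depending only on $n, x, y$, not on $K$. Applying this identity with $K = J$ and then with $K = -J$, and using that the real scalars $\beta_n$ commute with $J$ while $J^2 = -1$, one gets
\[
(x+yJ)^n + (x-yJ)^n = 2\alpha_n(x,y), \qquad J\bigl[(x-yJ)^n - (x+yJ)^n\bigr] = 2\beta_n(x,y).
\]

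Substituting these into the series for $f$ and keeping the coefficients $a_n$ on the right throughout, the right hand side of the claimed formula collapses to
\[
\sum_n \alpha_n(x,y)\, a_n + I \sum_n \beta_n(x,y)\, a_n = \sum_n \bigl(\alpha_n(x,y) + I\, \beta_n(x,y)\bigr) a_n = \sum_n (x+yI)^n a_n = f(x+yI),
\]
which is the desired equality. For the ``in particular'' statement, simply set $b := \tfrac{1}{2}[f(x+yJ) + f(x-yJ)]$ and $c := \tfrac{1}{2} J[f(x-yJ) - f(x+yJ)]$: both quaternions depend only on $x$, $y$ and the fixed $J$, not on $I$, so the formula reads $f(x+yI) = b + Ic$ for every $I \in \mathbb{S}$.

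The argument is essentially a direct computation, so I do not expect any real obstacle. The only delicate bookkeeping point is the non-commutative multiplication on the right: the trick is to isolate the real scalars $\alpha_n, \beta_n$ on the left of $a_n$ before summing, so that all commutation manipulations (including $J \beta_n = \beta_n J$ and $I \beta_n = \beta_n I$) are justified by the reality of the coefficients.
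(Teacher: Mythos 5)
Your proof is correct. The paper itself does not prove Theorem \ref{RF}: it is quoted from \cite{kernel, ext}, where it is established for general axially symmetric slice domains by a holomorphy argument on slices (one checks that the right-hand side, as a function of $x+yJ$, is killed by $\overline{\partial_J}$ and agrees with $f$ on $L_I$, then invokes the Identity Principle). Your power-series verification is the natural specialization to balls centred at the origin, and it is precisely the computation the paper alludes to in the Remark following the theorem and carries out concretely in the proof of Proposition \ref{norma}, where $w=x+yJ$ and the formula is unwound to $f(x+yI)=\sum_n \RRe(w^n)a_n + I\sum_n|\IIm(w^n)|a_n$ (your $\alpha_n(x,y)=\RRe(w^n)$ and, for $y\ge 0$, $\beta_n(x,y)=|\IIm(w^n)|$). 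The term-by-term bookkeeping is handled correctly: keeping the real scalars $\alpha_n,\beta_n$ to the left of the coefficients $a_n$ and using $J^2=-1$ is all that is needed, and absolute convergence on $B(0,R)$ (together with $|x\pm yJ|=|x+yI|$) justifies the rearrangements. What the reference's slice-holomorphy proof buys is generality -- it does not require a global power series expansion, so it works on domains other than balls centred at the origin -- but for the setting of this paper the two routes are equally valid.
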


\noindent Thanks to this result, it is possible to recover the values of a function on more general domains than open balls centred at the origin, from its values on a single slice $L_I$. This yields an extension theorem (see \cite{kernel, ext}) that in the special case of functions that are regular on $B(0,R)$ can be obtained by means of their power series expansion.
\begin{oss}
If $f_I$ is a holomorphic function on a disc $B_I=B(0,R)\cap L_I$ and its power series expansion is
\[ f_I(z)=\sum_{n=0}^{\infty}z^na_n, \quad \text{ with $\{a_n\}_{n\in \mathbb{N}}\subset \HH$}, \]
then the unique regular extension of $f_I$ to the whole ball $B(0,R)$ is the function defined as
\[ \ext (f_I) (q) = \sum_{n=0}^{\infty}q^na_n .\]
The uniqueness is guaranteed by the Identity Principle \ref{Id}.
\end{oss}

\noindent If we multiply pointwise two regular functions in general we will not obtain a regular function. To guarantee the regularity of the product we need to introduce a new multiplication operation, the $*$-product. On open balls centred at the origin we can define the $*$-product of two regular functions by means of their power series expansions, see \cite{zeri}. 

\begin{defn}
Let $f,g: B=B(0,R) \rightarrow \HH$ be regular functions and let
\[f(q)=\sum_{n=0}^{\infty}q^na_n, \quad g(q)=\sum_{n=0}^{\infty}q^nb_n\]
be their  series expansions.
The {\em regular product} (or {\em $*$-product}) of $f$ and $g$ is the function defined as
\[
f*g(q)=\sum_{n=0}^{\infty}q^n \bigg( \sum_{k=0}^{n}a_kb_{n-k}\bigg),\]
regular on $B$.
\end{defn}

\noindent Notice that the $*$-product is associative but generally it is not commutative. Its connection with the usual pointwise product is stated by the following result.
\begin{pro}\label{trasf}
Let $f(q)$ and $g(q)$ be regular functions on $B=B(0,R)$. Then, for all $q\in B$,
\begin{equation}\label{prodstar}
f*g(q)= \left\{ \begin{array}{ll}
 f(q)g(f(q)^{-1}qf(q)) & \text{if} \quad f(q)\neq 0\\
0 & \text{if} \quad f(q)=0
\end{array}
\right.
\end{equation}
\end{pro}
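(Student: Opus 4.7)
The plan is to work directly with the power series expansions of $f$ and $g$ inside the ball $B$, where absolute convergence lets us rearrange terms freely. Write $f(q)=\sum_k q^k a_k$ and $g(q)=\sum_m q^m b_m$; by definition $f*g(q)=\sum_n q^n\sum_{k=0}^n a_k b_{n-k}$. For any $q\in B$, both $\sum_k |q|^k|a_k|$ and $\sum_m |q|^m|b_m|$ converge, hence the double series $\sum_{k,m} q^{k+m} a_k b_m$ is absolutely convergent, and by reindexing $n=k+m$ it equals $f*g(q)$.

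The key observation is that $q$ commutes with its own powers, so $q^{k+m}=q^m q^k$. This lets me regroup
\[
f*g(q)=\sum_m\sum_k q^m q^k a_k b_m=\sum_m q^m\Bigl(\sum_k q^k a_k\Bigr)b_m=\sum_m q^m\, f(q)\, b_m.
\]
From this identity both cases of the proposition fall out. If $f(q)=0$, every term on the right vanishes and $f*g(q)=0$. If $f(q)\neq 0$, set $a=f(q)$ and use the elementary fact that conjugation by a nonzero quaternion is a ring automorphism of $\HH$, so $(a^{-1}qa)^m=a^{-1}q^m a$ for every $m\ge 0$. Then
\[
f(q)\,g\bigl(f(q)^{-1}q f(q)\bigr)=a\sum_m (a^{-1}q a)^m b_m=\sum_m a a^{-1}q^m a\, b_m=\sum_m q^m\,f(q)\,b_m,
\]
which matches the expression for $f*g(q)$ obtained above.

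I do not anticipate any serious obstacle: the proof is essentially a rearrangement of an absolutely convergent double series combined with the identity $(a^{-1}qa)^m=a^{-1}q^m a$. The only point that requires mild care is the justification for swapping the summation order, which rests on the joint absolute convergence of the series for $f$ and $g$ at any point $q\in B$. The rest is noncommutative bookkeeping, kept honest by the fact that $q$ sits in front of $a_k$ on the left while $b_m$ remains on the right throughout, so no illegitimate commutations are needed.
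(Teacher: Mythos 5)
Your proof is correct and is essentially the standard argument: the paper states this proposition without proof (deferring to the cited reference \cite{zeri}), and the proof there is exactly this rearrangement of the absolutely convergent double series together with the identity $(a^{-1}qa)^m=a^{-1}q^ma$. The only point you leave implicit --- that $f(q)^{-1}qf(q)$ lies in $B$, so $g$ is defined and its series converges there --- is immediate from $|f(q)^{-1}qf(q)|=|q|$.
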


\noindent We remark that if $q=x+yI$ (and if $f(q)\neq 0$), then $f(q)^{-1}qf(q)$ has the same modulus and same real part as $q$. Therefore $f(q)^{-1}qf(q)$ lies in the same $2$-sphere $x+y\mathbb{S}$ as $q$. We obtain then that a zero $x_0+y_0I$ of the function $g$ is not necessarily a zero of $f*g$, but an element on the same sphere $x_0+y_0\mathbb{S}$ does. In particular a real zero of $g$ is still a zero of $f*g$.
To present a characterization of the structure of the zero set of a regular function $f$ we need to introduce the following functions.

\begin{defn}\label{conj}
Let $f(q)=\sum_{n=0}^{\infty}q^na_n$ be a regular function on $B=B(0,R)$. We define the {\em regular conjugate} of $f$ as 
\[f^c(q)=\sum_{n=0}^{\infty}q^n\overline{a_n},\]
and the {\em symmetrization} of $f$ as
\[f^s(q)=f*f^c(q)=f^c*f(q)=\sum_{n=0}^{\infty}q^n\bigg(\sum_{k=0}^na_k\overline{a_{n-k}}\bigg).\]
\noindent Both $f^c$ and $f^s$ are regular functions on $B$.
\end{defn}
 
\begin{oss}\label{f^c}
Let $f(q)=\sum_{n=0}^{\infty}q^na_n$ be regular on $B=B(0,R)$ and let $I\in \s$. Consider the splitting of $f$ on $L_I$, $f_I(z)=F(z)+G(z)J$ with $J\in\s$, $J$ orthogonal to $I$ and $F,G$ holomorphic functions on $L_I$. 
In terms of power series, if $F(z)=\sum_{n=0}^{\infty}z^n\alpha_n$ and $G(z)=\sum_{n=0}^{\infty}z^n\beta_n$ we have
\begin{equation*}
\begin{aligned} 
f_I(z)=\sum_{n=0}^{\infty}z^na_n=\sum_{n=0}^{\infty}z^n\alpha_n+\sum_{n=0}^{\infty}z^n\beta_nJ
=\sum_{n=0}^{\infty}z^n(\alpha_n+\beta_nJ).
\end{aligned}
\end{equation*}
Hence the regular conjugate has splitting
\begin{equation*}
\begin{aligned} 
f^c_I(z)=\sum_{n=0}^{\infty}z^n(\overline{\alpha_n+\beta_nJ})=\sum_{n=0}^{\infty}z^n\overline{\alpha_n}-\sum_{n=0}^{\infty}z^n\beta_nJ.
\end{aligned}
\end{equation*}
That is 
\begin{equation*}
f_I^c(z)=\overline{F(\overline{z})}-G(z)J.
\end{equation*}
\end{oss}

\noindent The function $f^s$ is slice preserving (see \cite{ext}), i.e. $f^s(L_I)\subset L_I$ for every $I \in \s$. 
Thanks to this property it is possible to prove (see for instance \cite{zeri}) that the zero set of a regular function that does not vanishes identically, consists of isolated points or isolated $2$-spheres of the form $x +y \mathbb{S}$ with $x,y \in \mathbb{R}$, $y \neq 0$. 

\noindent 
A calculation shows that the slice derivative satisfies the Leibniz rule with respect to the $*$-product. 
\begin{pro}[Leibniz rule]\label{Leib}  
Let $f$ and $g$ be regular functions on $B=B(0,R)$. Then
$$\partial_C \big (f * g)(q)=\partial_C f * g(q) + f* \partial_C g (q)$$
for every $q \in B$.
\end{pro}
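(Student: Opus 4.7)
The plan is to reduce the identity to an equality of formal power series and verify it by a reindexing argument, exploiting the fact that on $B = B(0,R)$ regularity is equivalent to admitting a convergent power series expansion.

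First I would write $f(q) = \sum_{n\geq 0} q^n a_n$ and $g(q) = \sum_{n\geq 0} q^n b_n$. Since $\partial_C = \partial/\partial x$ and each monomial $q^n = (x+yI)^n$ satisfies $\partial_C q^n = nq^{n-1}$ on every slice $L_I$, term-by-term differentiation (legitimate by uniform convergence on compact subsets of $B$) gives
\begin{equation*}
\partial_C f(q) = \sum_{n\geq 0} q^n (n+1) a_{n+1}, \qquad \partial_C g(q) = \sum_{n\geq 0} q^n (n+1) b_{n+1}.
\end{equation*}
Using the definition of the $*$-product, I would then expand the three quantities $\partial_C(f*g)$, $\partial_C f * g$, and $f * \partial_C g$ as power series in $q$.

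For the left-hand side, the coefficient of $q^n$ in $\partial_C(f*g)$ is $(n+1)\sum_{k=0}^{n+1} a_k b_{n+1-k}$. For the right-hand side, the coefficient of $q^n$ in $\partial_C f * g$ is $\sum_{k=0}^{n} (k+1) a_{k+1} b_{n-k}$, which after the substitution $j=k+1$ becomes $\sum_{j=1}^{n+1} j\, a_j b_{n+1-j}$; similarly the coefficient of $q^n$ in $f * \partial_C g$ is $\sum_{j=0}^{n} (n+1-j) a_j b_{n+1-j}$. Summing these two and splitting according to whether $j=0$, $1 \leq j \leq n$, or $j = n+1$ collapses the weights $j$ and $n+1-j$ into $(n+1)$ uniformly, producing exactly $(n+1)\sum_{j=0}^{n+1} a_j b_{n+1-j}$, matching the left-hand side.

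There is no serious obstacle here: non-commutativity of $\HH$ is respected because throughout the computation the coefficients $a_k$ and $b_\ell$ remain in the same left-to-right order in every product $a_k b_\ell$, and the only scalars that get moved around are the natural numbers $k+1$ and $n-k+1$, which are real and hence central. The slight bookkeeping point to be careful about is that one must differentiate $q^n$, not $q^n a_n$ treated as a formal object, to avoid trying to commute $a_n$ past $q$; this is automatic because $\partial_C$ acts on functions, and on each slice $L_I$ one simply differentiates $(x+yI)^n a_n$ as in the holomorphic case. Once the two coefficient expressions have been shown to coincide, the identity $\partial_C(f*g) = \partial_C f * g + f * \partial_C g$ holds as an equality of regular functions on $B$.
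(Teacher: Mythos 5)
Your proof is correct and is essentially the argument the paper has in mind: the paper states the Leibniz rule with only the remark ``a calculation shows'', and the calculation is exactly your coefficient-by-coefficient comparison of power series, with the reindexing $j=k+1$ and the observation that the integer weights $j$ and $n+1-j$ are real and hence can be moved past the quaternionic coefficients. Nothing further is needed.
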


\noindent A basic result in analogy with the complex case, is the following (see \cite{open}).
\begin{teo}[Maximum Modulus Principle]\label{PMM}
Let $f: B=B(0,R) \rightarrow \HH$ be a regular function. If $|f|$ has a local maximum in $B$, then $f$ is constant in $B$. 
\end{teo}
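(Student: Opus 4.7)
My plan is to reduce to the classical complex-analytic maximum principle on a single slice and then spread the resulting local constancy to the whole ball via the Identity Principle. Let $q_{0}=x_{0}+y_{0}I_{0}\in B$ be a point at which $|f|$ attains a local maximum. Restricting to $B_{I_{0}}=B\cap L_{I_{0}}$, the real-valued function $|f_{I_{0}}|$ also has a local maximum at $q_{0}$. Fix any $J\in\s$ orthogonal to $I_{0}$ and invoke the Splitting Lemma (Lemma \ref{split}) to get holomorphic $F,G:B_{I_{0}}\to L_{I_{0}}$ with $f_{I_{0}}=F+GJ$.

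Since $F$ and $G$ take values in $L_{I_{0}}$ and $1$ is orthogonal to $J$ in $\HH$, one has
\[|f_{I_{0}}(z)|^{2}=|F(z)|^{2}+|G(z)|^{2}.\]
After identifying $L_{I_{0}}$ with $\mathbb{C}$, each of $|F|^{2}$ and $|G|^{2}$ is subharmonic on the disc $B_{I_{0}}$, and so is their sum. The classical strong maximum principle for subharmonic functions on connected domains then forces $|f_{I_{0}}|^{2}$ to be constant on some connected neighborhood $U\subset B_{I_{0}}$ of $q_{0}$.

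On $U$, computing the Laplacian and using that $F$ and $G$ are holomorphic gives
\[0=\Delta\bigl(|F|^{2}+|G|^{2}\bigr)=4\bigl(|F'|^{2}+|G'|^{2}\bigr),\]
so $F'\equiv G'\equiv 0$ on $U$ and hence $f_{I_{0}}$ is constant on $U$. I then apply the Identity Principle (Theorem \ref{Id}) to the regular function $f-f(q_{0})$: its zero set contains the open subset $U\subset B_{I_{0}}$ and in particular has accumulation points in $B_{I_{0}}$, so $f-f(q_{0})\equiv 0$ on the whole ball $B$.

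The main conceptual step is the first one: the Splitting Lemma converts the quaternionic modulus on a slice into the sum of squared moduli of two genuine holomorphic functions, after which the complex-analytic machinery (subharmonicity plus the Laplacian computation) takes over. The one potential pitfall is mishandling the case in which $q_{0}$ is real (so that $I_{0}$ is not uniquely determined), but the argument goes through with any choice of $I_{0}\in\s$, so this is a non-issue.
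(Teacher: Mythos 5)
Your argument is correct. Note first that the paper itself gives no proof of this theorem: it is imported as a preliminary from the reference \cite{open}, so there is no in-paper proof to compare against. Your reduction is sound at every step: the Splitting Lemma gives $|f_{I_0}|^2=|F|^2+|G|^2$ because $\{1,I_0,J,I_0J\}$ is an orthonormal real basis of $\HH$; this sum is subharmonic on the slice, so a local maximum forces local constancy; the identity $\Delta(|F|^2+|G|^2)=4(|F'|^2+|G'|^2)$ then kills $F'$ and $G'$ on an open subset of $B_{I_0}$; and the Identity Principle \ref{Id} applied to the regular function $f-f(q_0)$ (whose zero set meets $B_{I_0}$ in a set with accumulation points) propagates the constancy to all of $B$. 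Your handling of the case of real $q_0$ is also fine. For comparison, the proof in the literature cited by the paper follows the same slice-plus-splitting strategy but, instead of subharmonicity, applies the classical complex Maximum Modulus Principle to the auxiliary holomorphic function $z\mapsto\overline{F(q_0)}F(z)+\overline{G(q_0)}G(z)$ together with the Cauchy--Schwarz inequality; your subharmonic route is an equally valid and arguably more self-contained alternative, at the modest cost of invoking the strong maximum principle for subharmonic functions rather than only one-variable complex analysis.
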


\noindent In \cite{weierstrass} it is proved that we can estimate the maximum modulus of a function with its maximum modulus on each slice.
\begin{pro}\label{rep}
Let $f$ be a regular function on $B=B(0, R)$. If there exist an imaginary unit $I \in \s$ and a real number $m\in (0,+\infty)$ such that \[f_I(B_I)\subset B(0,m),\] 
then
\[ f(B) \subset B(0, 2m).\] 
In particular, if we set $m=\sup_{z\in B_I}|f(z)|$, then 
\begin{equation}\label{dis1}
\sup_{q \in B}|f(q)|\leq 2 m.
\end{equation}
\end{pro}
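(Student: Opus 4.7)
The plan is to obtain the bound by a direct application of the Representation Formula (Theorem \ref{RF}) with $J = I$, where $I$ is the distinguished imaginary unit provided by the hypothesis. Any point of $B$ can be written as $q = x + yK$ for some $K \in \s$ and real $x,y$, and crucially the two points $x+yI$ and $x-yI$ appearing on the right-hand side of the formula both lie in the slice $B_I$ (this uses only that $B$ is a ball centred at the origin, so $B_I$ is symmetric under $y \mapsto -y$).

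Specifically, I would start from
\begin{equation*}
f(x+yK) \;=\; \tfrac{1}{2}\bigl[f(x+yI) + f(x-yI)\bigr] + \tfrac{1}{2}\,K\bigl[I\bigl(f(x-yI) - f(x+yI)\bigr)\bigr],
\end{equation*}
and then apply the triangle inequality together with $|K| = |I| = 1$ (and submultiplicativity of $|\cdot|$). The two halves of the formula each contribute at most $\tfrac{1}{2}\bigl(|f(x+yI)| + |f(x-yI)|\bigr)$, so adding them gives
\begin{equation*}
|f(x+yK)| \;\leq\; |f(x+yI)| + |f(x-yI)|.
\end{equation*}
By the hypothesis $f_I(B_I) \subset B(0,m)$ both summands are strictly less than $m$, yielding $|f(q)| < 2m$ and thus $f(B) \subset B(0,2m)$.

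For the second assertion, setting $m = \sup_{z\in B_I}|f(z)|$ makes the inequality $|f(x\pm yI)| \leq m$ hold, and the same estimate gives $|f(q)| \leq 2m$ for every $q \in B$, hence the stated supremum bound \eqref{dis1}. There is no real obstacle here: the entire content lies in the Representation Formula, which converts the two-dimensional information (boundedness on one slice) into a global bound via an explicit pointwise identity; the triangle inequality then does all the remaining work, with the factor $2$ arising from the fact that both the symmetric and the antisymmetric parts of $f$ along $L_I$ contribute at most $m$.
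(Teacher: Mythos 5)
Your proof is correct and is the standard argument: the paper does not reproduce a proof (it defers to the cited reference \cite{weierstrass}), but the estimate there is obtained in exactly this way, by applying the Representation Formula with $J=I$ and the triangle inequality, with the factor $2$ coming from the two terms $|f(x+yI)|$ and $|f(x-yI)|$. All the details you need (both $x\pm yI$ lie in $B_I$ since $|x\pm yI|=|q|<R$, and multiplicativity of the quaternionic modulus) are handled correctly.
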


\section{Uniform norm and regular conjugation}\label{3}

This section is devoted to prove that the uniform norm on an open ball centred at the origin is the same for a regular function and for its regular conjugate.

\begin{pro}\label{cS}
Let $c$ be in $\HH$. Then the sets $\{cI | I \in \s \}$ and $\{Ic | I \in \s \}$ do coincide.
\end{pro}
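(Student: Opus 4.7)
The plan is to handle the trivial case $c=0$ separately (both sets equal $\{0\}$) and then, for $c \neq 0$, exhibit a bijection between $\s$ and itself that realizes each element of one set as an element of the other via \emph{inner conjugation by $c$}.

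More precisely, the key observation is that for every $c \in \HH \setminus \{0\}$ and every $I \in \s$, the quaternion $J := cIc^{-1}$ again lies in $\s$, since $J^2 = cIc^{-1}\cdot cIc^{-1} = cI^2c^{-1} = -cc^{-1} = -1$. With this $J$ one has $Jc = cIc^{-1}c = cI$, so $cI$ belongs to $\{Ic \mid I \in \s\}$. This proves the inclusion $\{cI \mid I \in \s\} \subset \{Ic \mid I \in \s\}$. For the reverse inclusion I would argue symmetrically: given $I \in \s$, set $J := c^{-1}Ic$, observe that $J^2 = c^{-1}I^2c = -1$ so $J \in \s$, and note $cJ = Ic$, so $Ic$ belongs to $\{cI \mid I \in \s\}$.

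I do not foresee a genuine obstacle here; the statement is essentially the elementary fact that conjugation $x \mapsto c x c^{-1}$ by a nonzero quaternion is an $\rr$-linear automorphism of $\HH$ that preserves real and imaginary parts as well as the modulus, hence maps $\s$ onto itself. The only mild care is to treat $c = 0$ separately, which is why I split into two cases at the outset.
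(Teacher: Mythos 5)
Your proof is correct, and it takes a genuinely different route from the paper's. You observe that for $c\neq 0$ the element $L:=cIc^{-1}$ satisfies $L^2=cI^2c^{-1}=-1$, hence $L\in\s$, and $Lc=cI$; together with the symmetric argument and the trivial case $c=0$ this gives both inclusions. The paper instead constructs $L$ by hand: it writes $c=a+bJ$ with $a,b\in\rr$, $J\in\s$, expands the desired identity $aI+bJI=aL+bLJ$ using $IJ=-\langle I,J\rangle+I\times J$ in the orthonormal basis $1,J,K,JK$, and solves the resulting linear system for the coordinates of $L$. Your conjugation argument is shorter, coordinate-free, and makes the underlying reason transparent (conjugation by a nonzero quaternion is an $\rr$-algebra automorphism fixing the reals, so it permutes $\s=\{q\mid q^2=-1\}$); it also explicitly covers the degenerate case $c=0$, which the paper's linear system only handles implicitly (for $a=b=0$ the system does not determine $L$ uniquely, though the statement is then trivial). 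In fact the two proofs produce the same witness: $cIc^{-1}$ is precisely the solution of the paper's system when $c\neq0$.
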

\begin{proof} 
Let $c=a+bJ$ with $a,b \in \rr$ and $J \in \s$. Let us fix $I\in\s$. We want to find an element $L$ of $\s$ such that $cI=Lc$, that is such that 
\begin{equation}\label{bS}
aI+bJI=aL+bLJ.
\end{equation}
Let us denote by $\langle \ ,\ \rangle$ the usual scalar product and by $\times$ the vector product. Recalling that for all imaginary units $I,J \in \s$ the following multiplication rule holds, see \cite{GSAdvances},
\[
IJ=-\langle I,J\rangle +I\times J,
\]
we can write equation \eqref{bS} as
\begin{equation*}
aI+b(-\langle J,I \rangle +J \times I)=aL+b(-\langle L,J \rangle + L \times J).
\end{equation*}
If we complete $J$ to a orthonormal basis  $1,J,K, JK$, of $\HH$ over $\rr$, then we can decompose
\[
I=i_1J+i_2K+i_3JK \quad \text{and} \quad L=l_1J+l_2K+l_3JK,
\]
obtaining that
\[
J\times I= -i_3K+i_2JK \quad \text{and} \quad L\times J=l_3K-l_2JK.
\]
Hence we need an imaginary unit $L$ such that
\begin{equation*}
\begin{aligned}
& a(i_1J+i_2K+i_3JK)+b(-i_1 +-i_3K+i_2JK)\\
&=a(l_1J+l_2K+l_3JK)+b(-l_1 + l_3K-l_2JK).
\end{aligned}
\end{equation*}
Considering the different components along $1,J,K,JK$, we get that $L$ has to satisfy the following linear system
\begin{equation}\label{sistema}     
\left\{
\begin{array}{rl}
 &bi_1=bl_1\\
 &ai_1=al_1\\
  &ai_2-bi_3=al_2+bl_3\\
  &bi_2+ai_3=-bl_2+al_3
\end{array} \right.
\end{equation}
that has a unique solution $(l_1,l_2, l_3)$ determining $L$.
\end{proof}

\begin{pro}\label{norma}
Let $f$ be a regular function on $B=B(0,R)$. For any sphere of the form $x+y\s$ contained in $B$ the following equalities hold: 
\[\inf_{I \in\s}|f(x+yI)|=\inf_{I \in\s}|f^c(x+yI)| \quad \text{and} \quad \sup_{I \in \s}|f(x+yI)|=\sup_{I \in \s}|f^c(x+yI)|
\]
\end{pro}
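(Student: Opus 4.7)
The plan is to prove that, on any sphere $x + y\s \subset B$, the quaternions $f(x+yI)$ and $f^c(x+yI)$, as $I$ ranges over $\s$, have the same set of moduli; both displayed equalities then follow at once.

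First I would record the values of $f$ and $f^c$ on the sphere in a uniform way. Expanding $(x+yI)^n$ using $I^2 = -1$ yields
\[(x+yI)^n = A_n(x,y) + I B_n(x,y),\]
with $A_n, B_n$ real polynomials that do \emph{not} depend on $I$. Substituting into the power series of $f$ and of $f^c(q) = \sum_n q^n \overline{a_n}$, and using that real scalars commute with everything, one obtains
\[f(x+yI) = b + Ic, \qquad f^c(x+yI) = \bar b + I \bar c,\]
where $b = \sum_n A_n a_n$ and $c = \sum_n B_n a_n$ depend only on the sphere, not on $I$. (Alternatively, the first identity is immediate from the Representation Formula~\ref{RF}, and the second can be read off from Remark~\ref{f^c}.)

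Next I would invoke Proposition~\ref{cS} to slide the imaginary unit past $\bar c$: as $I$ varies in $\s$, the sets $\{I\bar c\}$ and $\{\bar c I\}$ coincide as subsets of $\HH$, so
\[\{f^c(x+yI) : I \in \s\} = \{\bar b + \bar c I : I \in \s\}.\]
A direct computation using $\bar I = -I$ gives $\overline{\bar b + \bar c I} = b - Ic$, hence $|\bar b + \bar c I| = |b - Ic|$. Because $I \mapsto -I$ is a bijection of $\s$, the set of moduli $\{|b - Ic| : I \in \s\}$ coincides with $\{|b + Ic| : I \in \s\} = \{|f(x+yI)| : I \in \s\}$. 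Combining these identifications, the set of moduli of $f^c$ on the sphere equals that of $f$, which yields both the supremum and infimum equalities.

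I do not expect a serious obstacle: the essence of the argument is to bring $f^c$ into a form in which Proposition~\ref{cS} can be applied to move an imaginary unit from the left of $\bar c$ to its right, thereby reducing matters to the obvious symmetry $\s = -\s$. The only mild care required is verifying that the polynomial coefficients $A_n, B_n$ are independent of $I$, which is immediate from the binomial expansion of $(x+yI)^n$.
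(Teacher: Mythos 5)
Your proof is correct and follows essentially the same route as the paper: both arguments write $f(x+yI)=b+Ic$ and $f^c(x+yI)=\bar b+I\bar c$ with $b,c\in\HH$ independent of $I$ (the paper gets this from the Representation Formula \ref{RF}, you from the binomial expansion of $(x+yI)^n$), and then combine Proposition \ref{cS} with quaternionic conjugation and the symmetry $\s=-\s$. The only difference is the order in which conjugation and Proposition \ref{cS} are applied, which is immaterial.
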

\begin{proof} 
Let $q=x+yI$ be an element of $B$. Theorem \ref{RF} yields that $f$ is affine on the sphere  $x+y\s$ and there exist $b, c\in \HH$ such that  $f(x+yI)=b+Ic$ for all $I\in \s$. We want to compare now the value of $f$ with the one attained by $f^c$ by means of their power series expansions.
If $f$ has power series expansion $f(q)=\sum_{n\geq0}q^na_n$ and we set $w=x+yJ$, then by Theorem \ref{RF} we get
\begin{equation}
\begin{aligned}
f(q)=f(x+yI)&=\frac{1}{2}\bigg( \sum_{n\geq0}w^na_n+\sum_{n\geq0}\overline{w^n}a_n\bigg)+\frac{IJ}{2}\bigg(\sum_{n\geq0}\overline{w^n}a_n-\sum_{n\geq0}w^na_n \bigg)\\
&=\frac{1}{2} \sum_{n\geq0}(w^n+\overline{w^n})a_n+\frac{IJ}{2}\sum_{n\geq0}(\overline{w^n}-w^n)a_n \\
&=\frac{1}{2} \sum_{n\geq0}2\RRe(w^n)a_n+\frac{IJ}{2}\sum_{n\geq0}-2\IIm(w^n)a_n \\
&=\sum_{n\geq0}\RRe(w^n)a_n+I\sum_{n\geq0}|\IIm(w^n)|a_n. \\
\end{aligned}
\end{equation}
Hence the constants $b$ and $c$ are  
\[b=\sum_{n\geq0}\RRe(w^n)a_n \quad \text{and} \quad c=\sum_{n\geq0}|\IIm(w^n)|a_n.\]
Since the power series expansion of $f^c$ is $f^c(q)=\sum_{n\geq0}q^n\overline{a_n}$, we obtain that for all $I\in \s$
\[
f^c(x+yI)=\sum_{n\geq0}\RRe(w^n)\overline{a_n}+I\sum_{n\geq0}|\IIm(w^n)|\overline{a_n}.
\]
Notice that $\RRe(w^n)$ and $|\IIm(w^n)|\in \rr$ for all $n\geq0$, then, in terms of $b$ and $c$, we can write
\[
f^c(x+yI)=\overline{b}+I\overline{c}
\]
for all $I$ in $\s$.
Hence
\[\sup_{I\in \s}|f^c(x+yI)|=\sup_{I\in \s}|\overline{b}+I\overline{c}|=\sup_{I\in \s}|\overline{\overline{b}+I\overline{c}}|=\sup_{I\in \s}|b-cI|=\sup_{I\in \s}|b+cI|.
\]
By Proposition \ref{cS} we obtain that
\[
\sup_{I\in \s}|f^c(x+yI)|=\sup_{I\in \s}|b+cI|=\sup_{I\in \s}|b+Ic|=\sup_{I\in\s}|f(x+yI)|.
\] 
Exactly the same arguments hold for the infimum, so we can conclude also that
\[\inf_{I\in\s} |f^c(x+yI)=\inf_{I\in\s}|f(x+yI)|.\]
\end{proof}

\begin{coro}\label{inf}
Let $f$ be a regular function on $B=B(0,R)$. Then
\[
\sup_{q \in B}|f(q)|=\sup_{q \in B}|f^c(q)| \quad \text{and} \quad \inf_{q\in B}|f(q)|=\inf_{q\in B}|f^c(q)|.
\]
\end{coro}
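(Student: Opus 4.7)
The plan is to reduce the global statement on $B$ to the sphere-wise statement of Proposition \ref{norma} by writing $B$ as the union of the $2$-spheres it contains. Concretely, every $q \in B$ can be written as $q = x+yI$ with $x \in \rr$, $y \geq 0$ and $I \in \s$, and then $q$ lies on the sphere $x+y\s$, which is entirely contained in $B$ whenever $x^2+y^2 < R^2$. Setting $D = \{(x,y)\in \rr^2 : y \geq 0, \ x^2+y^2 < R^2\}$, we therefore have the decomposition
\[
B = \bigcup_{(x,y) \in D} (x+y\s).
\]

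First I would rewrite the supremum as a double supremum over this decomposition:
\[
\sup_{q\in B}|f(q)| = \sup_{(x,y)\in D}\,\sup_{I\in\s}|f(x+yI)|.
\]
Then I would apply Proposition \ref{norma}, which says that on each sphere $x+y\s \subset B$ the two inner suprema agree:
\[
\sup_{I\in\s}|f(x+yI)| = \sup_{I\in\s}|f^c(x+yI)|.
\]
Substituting this identity inside the outer supremum and running the decomposition in reverse for $f^c$ yields $\sup_{q\in B}|f(q)| = \sup_{q\in B}|f^c(q)|$. The argument for the infimum is identical, using the second equality in Proposition \ref{norma}.

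The only minor point to address is the degenerate case $y=0$, where the "sphere" $x+0\cdot\s$ collapses to the real point $\{x\}$. In that case, writing $f(q)=\sum_{n\geq 0} q^n a_n$ gives $f^c(x) = \sum_{n\geq 0} x^n \overline{a_n} = \overline{f(x)}$, so $|f(x)| = |f^c(x)|$ automatically and the equality trivially persists. There is no real obstacle here: the corollary is an immediate packaging of Proposition \ref{norma}, and the work consists merely in correctly exchanging the supremum over $q\in B$ with the supremum over the spheres foliating $B$.
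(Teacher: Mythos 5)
Your proposal is correct and follows essentially the same route as the paper: foliate $B$ by the $2$-spheres $x+y\s$ with $(x,y)$ ranging over a half-disc in $\rr^2$, rewrite the supremum (resp.\ infimum) as an iterated one, and apply Proposition \ref{norma} sphere by sphere. Your extra remark on the degenerate real spheres $y=0$ is a harmless refinement the paper leaves implicit.
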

\begin{proof} 
If $S$ is the subset of $\rr^2$ defined as
\[
S=\{(x,y)\in \rr^2 \ | \ y \geq 0, x^2+y^2 \leq R^2\},
\]
then we can cover the entire ball $B$ with spheres of the form $x+y\s$ as
\[B=\bigcup_{(x,y)\in S}x+y\s=\bigcup_{(x,y)\in S}\bigcup_{I\in\s}x+yI.\]
By Proposition \ref{norma}  we get
\begin{equation*}
\begin{aligned}
\sup_{q \in B}|f(q)|&=\sup_{(x,y)\in S}\sup_{I\in\s}|f(x+yI)|\\
&=\sup_{(x,y)\in S}\sup_{I\in\s}|f^c(x+yI)|=\sup_{q\in B}|f^c(q)|
\end{aligned}
\end{equation*}
and the same holds for the infimum, hence we have also
\[
\inf_{q\in B}|f(q)|=\inf_{q\in B}|f^c(q)|.
\]
\end{proof}

\section{A norm for a mean value theorem}\label{4}

The main technical tool to prove an analog of the Bloch-Landau Theorem for regular functions is stated in terms of a new norm. This norm is equivalent to the uniform norm, on the space of functions that are regular on an open ball $B$ centred at the origin, $B=B(0,R)$. The motivation to introduce this norm relies upon one of its properties, stated at the end of the section, which is useful to prove a mean value theorem.

\noindent Let $f:B\rightarrow \HH$ be a regular function. Take $I,J \in \s$, $I$ orthogonal to $J$ and, according to the Splitting Lemma \ref{split}, let $F,G:B_I\rightarrow L_I$ be the holomorphic functions such that the restriction of $f$ to $B_I=B\cap L_I$ is
$$f_I(z)=F(z)+G(z)J.$$
Let $\Omega$ be a subset of the ball $B$, and let $||\cdot||_{\Omega}$ denote the uniform norm on  $\Omega \subseteq B$, $$||\cdot||_{\Omega}=\sup_{\Omega}|\cdot|$$ 
For any $I\in \s$, we will indicate with $|| \cdot ||_I$ the function 
\begin{equation*}
||\cdot ||_I : \{f: B \rightarrow \HH\  |\ f \ \text{is regular} \ \} \rightarrow [0, +\infty) 
\end{equation*}
defined by
\[||f||_I^2=||F||_{B_I}^2+||G||_{B_I}^2.\]

\begin{oss}
For all $I \in \s$, the function $||\cdot||_I$ does not depend on $J$; in fact, if we choose another imaginary unit $K\in \s$, orthogonal to $I$, then the splitting of $f$ on $L_I$ is
\[f_I(z)=\tilde{F}(z)+\tilde{G}(z)K,\]
where $\tilde{F}(z)=F(z)$, because $I$ and $K$ are orthogonal, and hence $\tilde{G}(z)=G(z)JK^{-1}$. Then 
$$|\tilde{G}(z)|=|G(z)JK^{-1}|=|G(z)|$$
for all $z$ in $B_I$, and hence $||f||_I$ does not change. 
\end{oss}

\noindent Consider now the function 
$$||\cdot||: \{f:B\rightarrow \HH \ | \ f \ \text{ is regular} \}\rightarrow [0,+\infty)$$
defined by 
$$ ||f||=\sup_{I \in \s} ||f||_I .$$

\begin{pro}
The function $||\cdot||$ is a norm on the real vector space  $\mathcal{B}=\{f:B\rightarrow \HH\  | \ f \ \text{ is regular} \}$. 
\end{pro}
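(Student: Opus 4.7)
The plan is to verify the three standard axioms of a norm on the real vector space $\mathcal{B}$. For this, fix $I\in\s$ and any $J\in\s$ orthogonal to $I$, so that the Splitting Lemma \ref{split} produces holomorphic $F,G:B_I\to L_I$ with $f_I=F+GJ$; the Remark immediately preceding the proposition guarantees that $||\cdot||_I$ does not depend on the particular $J$ chosen. The splittings behave linearly over $\rr$, in the sense that if $g_I=F'+G'J$ and $\lambda,\mu\in\rr$, then $(\lambda f+\mu g)_I=(\lambda F+\mu F')+(\lambda G+\mu G')J$; this will be the mechanism by which the axioms of the classical uniform norm on $B_I$ transfer to $||\cdot||$.

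Positive homogeneity is immediate: for $\lambda\in\rr$, one computes $||\lambda f||_I^2=\lambda^2||F||_{B_I}^2+\lambda^2||G||_{B_I}^2=\lambda^2||f||_I^2$, and taking the supremum over $\s$ yields $||\lambda f||=|\lambda|\,||f||$. For subadditivity, the classical triangle inequality for $||\cdot||_{B_I}$ gives $||F+F'||_{B_I}\leq ||F||_{B_I}+||F'||_{B_I}$ and the analogous estimate for $G+G'$; combining these with Minkowski's inequality in $\rr^2$, applied to the vectors $(||F||_{B_I},||G||_{B_I})$ and $(||F'||_{B_I},||G'||_{B_I})$, produces $||f+g||_I\leq ||f||_I+||g||_I$. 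A further application of the elementary bound $\sup_I(a_I+b_I)\leq \sup_I a_I+\sup_I b_I$ then yields $||f+g||\leq ||f||+||g||$.

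The step I expect to be the main obstacle is the nontrivial direction of definiteness, namely $||f||=0\Rightarrow f\equiv 0$. Vanishing of $||f||_I$ for every $I\in\s$ only forces $F$ and $G$ to be identically zero on each single disc $B_I$, and hence $f_I\equiv 0$ on that slice. To promote this to $f\equiv 0$ on all of $B$, I will invoke the Identity Principle (Theorem \ref{Id}): the zero set $Z_f$ contains the entire disc $B_I$, which certainly has accumulation points in $B_I$, forcing $f$ to vanish identically on $B$. The converse implication, as well as nonnegativity, are evident from the definition, which completes the verification that $||\cdot||$ is a norm on $\mathcal{B}$.
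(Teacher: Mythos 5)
Your proposal is correct and follows essentially the same route as the paper: homogeneity from the real-linearity of the splitting, definiteness via the Identity Principle, and subadditivity by combining the slice-wise triangle inequality with the Euclidean norm on $\rr^2$. The only cosmetic difference is that you invoke Minkowski's inequality in $\rr^2$ directly, whereas the paper expands the squares and reduces to the Cauchy--Schwarz inequality --- which is the same estimate.
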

\begin{proof} 
Let $f\in \mathcal{B}$, $I\in \s$, and take $J\in \s$ orthogonal to $I$. Let $F$, and $G$ be the holomorphic functions on $L_I$, such that $f_I(z)=F(z)+G(z)J$ for all $z \in B_I$. Then: 
\begin{itemize}
\item $||f||=0$ if and only if, for all $I \in \s$, 
$$0=||f||_I^2= ||F||_{B_I}^2+||G||_{B_I}^2,$$ 
and hence if and only if $F=G=0$. By the Identity Principle \ref{Id} we can conclude that $||f||=0$ if and only if $f=0.$
\item Let $c\in \rr$. Then the splitting of $fc$ on $L_I$ is $(fc)_I(z)= F(z)c+G(z)cJ.$ Hence, using the homogeneity of the uniform norm, we have
\begin{equation*}
\begin{aligned}
||fc||^2&=\sup_{I\in \s}||fc||^2_I\\
&=\sup_{I\in \s}\big(||F||^2_{B_I}|c|^2+||G||^2_{B_I}|c|^2\big)\\
&=\sup_{I \in \s}\big(||F||^2_{B_I}+||G||^2_{B_I}\big)|c|^2=|c|^2||f||^2.
\end{aligned}
\end{equation*}
\item If $F_j,G_j$ are the splitting functions of regular functions $f_j$ with respect to $I$ and $J$ for $j=1,2$, then 
\begin{equation}\label{triangle1}
\begin{aligned}
&(||f_1||_I+||f_2||_I)^2\\
& =\Big( \sqrt{||F_1||^2_{B_I}+||G_1||^2_{B_I}} + \sqrt{||F_2||^2_{B_I}+||G_2||^2_{B_I}} \Big)^2\\
&= ||F_1||^2_{B_I}+||G_1||^2_{B_I} + ||F_2||^2_{B_I}+||G_2||^2_{B_I} \\
& \hspace{1cm} + 2 \sqrt{(||F_1||_{B_I}^2 +||G_1||_{B_I}^2)(||F_2||_{B_I}^2+||G_2||_{B_I}^2)},\\
\end{aligned}
\end{equation}
and
\begin{equation}\label{triangle2}
\begin{aligned}
&||f_1+f_2||_I^2=||F_1+F_2||^2_{B_I}+||G_1+G_2||^2_{B_I}\\
&=||(F_1+F_2)^2||_{B_I}+||(G_1+G_2)^2||_{B_I}\\
&=||F_1^2+F_2^2+2F_1F_2||_{B_I}+||G_1^2+G_2^2+2G_1G_2||_{B_I}\\
& \leq ||F_1^2||_{B_I}+||F_2^2||_{B_I}+2||F_1||_{B_I}||F_2||_{B_I}\\
& \hspace{2cm} +||G_1^2||_{B_I}+||G_2^2||_{B_I}+2||G_1||_{B_I}||G_2||_{B_I}.  
\end{aligned}
\end{equation}
The last quantity in equation \eqref{triangle2} is less or equal than $(||f_1||_I+||f_2||_I)^2$  if and only if
\begin{equation*}
\begin{aligned}
&||F_1||_{B_I}||F_2||_{B_I}+||G_1||_{B_I}||G_2||_{B_I}\\
&\leq \sqrt{(||F_1||_{B_I}^2 +||G_1||_{B_I}^2)(||F_2||_{B_I}^2+||G_2||_{B_I}^2)},
\end{aligned}
\end{equation*}
that is, if and only if
\begin{equation*}
\begin{aligned}
&\langle(||F_1||_{B_I},||G_1||_{B_I}),(||F_2||_{B_I},||G_2||_{B_I})\rangle\\
&\leq \sqrt{||F_1||_{B_I}^2 +||G_1||_{B_I}^2}\sqrt{||F_2||_{B_I}^2+||G_2||_{B_I}^2},
\end{aligned}
\end{equation*}
that holds thanks to Cauchy-Schwarz inequality for the scalar product on $\rr^2$. 
\end{itemize}
\noindent Therefore the function $|| \cdot||$ is a norm.

\end{proof}

\noindent Let us now show that the norms $||\cdot||$ and $||\cdot||_{B}$, defined on $\mathcal{B}$, are equivalent.
\begin{pro}\label{equivalenza}
Let $f:B=B(0,R) \rightarrow \HH$ be a regular function. Then
$$\frac{\sqrt{2}}{2}||f||\leq||f||_{B}\leq ||f||.$$ 
\end{pro}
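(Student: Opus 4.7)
\medskip

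The plan is to unpack both inequalities via the pointwise identity $|f_I(z)|^2 = |F(z)|^2 + |G(z)|^2$, which comes from the orthogonality of the basis $\{1,I,J,IJ\}$ of $\HH$ over $\rr$. Concretely, writing $F(z) = a+bI$ and $G(z) = c+dI$ with $a,b,c,d \in \rr$, the splitting on $L_I$ gives $f_I(z) = a + bI + cJ + dIJ$, and since $1, I, J, IJ$ are pairwise orthogonal of unit length in $\HH$, one has $|f_I(z)|^2 = a^2+b^2+c^2+d^2 = |F(z)|^2 + |G(z)|^2$. This is the single identity that drives everything.

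\textbf{Upper bound} $\|f\|_B \leq \|f\|$. First I would fix $I \in \s$ and note that by the pointwise identity above, $|f_I(z)|^2 \leq \|F\|_{B_I}^2 + \|G\|_{B_I}^2 = \|f\|_I^2$ for every $z \in B_I$. Taking the supremum over $z \in B_I$ gives $\sup_{z\in B_I}|f_I(z)| \leq \|f\|_I \leq \|f\|$. Since $B = \bigcup_{I \in \s} B_I$, taking the supremum over $I$ yields $\|f\|_B \leq \|f\|$.

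\textbf{Lower bound} $\frac{\sqrt{2}}{2}\|f\| \leq \|f\|_B$. Again fix $I \in \s$. From $|F(z)|^2 + |G(z)|^2 = |f_I(z)|^2$ one immediately reads off the pointwise inequalities $|F(z)| \leq |f_I(z)|$ and $|G(z)| \leq |f_I(z)|$ on $B_I$. Taking suprema separately, $\|F\|_{B_I}^2 \leq \|f\|_B^2$ and $\|G\|_{B_I}^2 \leq \|f\|_B^2$, so $\|f\|_I^2 = \|F\|_{B_I}^2 + \|G\|_{B_I}^2 \leq 2\|f\|_B^2$. Taking the supremum over $I \in \s$ gives $\|f\|^2 \leq 2\|f\|_B^2$, which is equivalent to the desired $\frac{\sqrt{2}}{2}\|f\| \leq \|f\|_B$.

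There is no real obstacle here: the whole proof is a rearrangement of the orthogonal decomposition $f_I = F + GJ$ with respect to the orthonormal basis $\{1, I, J, IJ\}$ of $\HH$. The only point worth checking carefully is the initial remark that the components of $F$ and $G$ along $1, I$ become the four orthogonal coordinates of $f_I(z)$ along $1, I, J, IJ$; once that is stated, both inequalities follow by taking suprema, first over points of $B_I$, then over $I \in \s$.
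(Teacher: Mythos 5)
Your proof is correct and follows essentially the same route as the paper's: both arguments rest on the pointwise identity $|f_I(z)|^2=|F(z)|^2+|G(z)|^2$ coming from the orthogonality of $1,I,J,IJ$, and then obtain the two inequalities by taking suprema over $z\in B_I$ and over $I\in\s$ in the same way.
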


\begin{proof} 
Let $I,J \in \s$, $I$ orthogonal to $J$, and let $F,G$ be holomorphic functions on $L_I$, such that $f_I(z)=F(z)+G(z)J$ for all $z \in B_I$. Then we have

\begin{equation*}
\begin{aligned}
||f||_{B}^2&=\sup_{q\in B}|f(q)|^2=\sup_{I\in\s} \sup_{z\in B_I}|f_I(z)|^2\\
&=\sup_{I\in\s} \sup_{z\in B_I}|F(z)+G(z)J|^2=\sup_{I\in\s} \sup_{z\in B_I}\big(|F(z)|^2+|G(z)|^2\big)\\
&\leq \sup_{I\in\s} \big( \sup_{z\in B_I}|F(z)|^2 + \sup_{z\in B_I}|G(z)|^2 \big)\\
&=\sup_{I\in\s} \big( ||F||_{B_I}^2 + ||G||^2_{B_I} \big)=||f||^2.
\end{aligned}
\end{equation*}

\noindent Conversely
\begin{equation*}
\begin{aligned}
||f||^2&= \sup_{I\in\s}||f||_I^2=\sup_{I\in\s} \big( ||F||_{B_I}^2 + ||G||^2_{B_I} \big)\\
&\leq \sup_{I\in \s }\big( ||f_I||_{B_I}^2 + ||f_I||^2_{B_I} \big)=2 ||f||^2_{B}.
\end{aligned}
\end{equation*}
\end{proof}

\noindent As we anticipated at the beginning of this section, in terms of the norm $|| \cdot ||$ we can state (and prove) a mean value theorem.
\begin{teo}\label{media}
Let $f$ be a regular function on $B=B(0,R)$ such that $f(0)=0$. Then
$$|q^{-1}f(q)|\leq ||\partial_Cf||$$
for all $q \in B$.
\end{teo}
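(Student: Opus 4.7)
My plan is to reduce the inequality to a slicewise complex estimate that fits the definition of $\|\cdot\|$ perfectly. The point is that on the slice $L_I$ through $q$, the classical fundamental theorem of calculus produces exactly the two summands that make up $\|\partial_C f\|_I^2$.

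Since $f(0)=0$ its power series has no constant term, so $q\mapsto q^{-1}f(q)=\sum_{n\geq 0}q^n a_{n+1}$ extends to a regular function on all of $B$ (with value $\partial_C f(0)$ at the origin); by continuity it suffices to prove the bound for $q\neq 0$. Fix such a $q$, choose $I\in\s$ with $q\in L_I$, and set $z=q$. Pick $J\in\s$ orthogonal to $I$ and apply the Splitting Lemma to obtain holomorphic $F,G:B_I\to L_I$ with $f_I(z)=F(z)+G(z)J$ and, since $f(0)=0$, $F(0)=G(0)=0$. Because $\partial_C$ acts as $\partial/\partial x$ and $F,G$ are holomorphic, the restriction of $\partial_C f$ to $L_I$ splits as $F'(z)+G'(z)J$, so by definition
\[\|\partial_C f\|_I^2=\|F'\|_{B_I}^2+\|G'\|_{B_I}^2.\]

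Next I would bound the two complex pieces separately via the planar fundamental theorem of calculus on the convex disc $B_I$:
\[\frac{F(z)}{z}=\int_0^1 F'(tz)\,dt,\qquad\text{hence}\qquad \Bigl|\frac{F(z)}{z}\Bigr|\leq \|F'\|_{B_I},\]
and likewise $|G(z)/z|\leq\|G'\|_{B_I}$. Since $\{1,I,J,IJ\}$ is an orthonormal real basis of $\HH$ and $F(z),G(z),z^{-1}$ all lie in the commutative slice $L_I$, the values $z^{-1}F(z)\in L_I$ and $z^{-1}G(z)J\in L_I J$ are orthogonal, giving the Pythagorean identity $|z^{-1}f_I(z)|^2=|z^{-1}F(z)|^2+|z^{-1}G(z)|^2$. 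Chaining these displays yields
\[|q^{-1}f(q)|^2\leq \|F'\|_{B_I}^2+\|G'\|_{B_I}^2=\|\partial_C f\|_I^2\leq \|\partial_C f\|^2,\]
which is the desired inequality.

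The delicate step is the factoring of $z^{-1}$ and the Pythagorean identity, since noncommutativity of $\HH$ typically obstructs such manipulations; it works here precisely because $z$, $z^{-1}$, $F(z)$, and $G(z)$ all lie in the same commutative slice $L_I$, while $J$ is orthogonal to $L_I$. This is also the conceptual reason why the norm $\|\cdot\|$, built as a slicewise $\ell^2$-combination of the uniform norms of $F$ and $G$, is exactly the right object to bound $|q^{-1}f(q)|$: the pointwise modulus on each slice decomposes into the same kind of sum that defines $\|\partial_C f\|_I$.
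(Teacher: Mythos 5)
Your proof is correct and follows essentially the same route as the paper: split $f_I=F+GJ$ on the slice through $q$, apply the fundamental theorem of calculus to $F$ and $G$ to get $|z^{-1}F(z)|\le\|F'\|_{B_I}$ and $|z^{-1}G(z)|\le\|G'\|_{B_I}$, and combine via the Pythagorean identity to reach $\|\partial_C f\|_I\le\|\partial_C f\|$. Your treatment is in fact slightly more careful than the paper's, making explicit the removable singularity at $0$, the identification $(\partial_C f)_I=F'+G'J$, and the orthogonality justification for the Pythagorean step.
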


\begin{proof} 
Let $I\in \s$ and take $J \in \s $ orthogonal to $I$. By the Splitting Lemma \ref{split} there exist  $F,G:B_I\rightarrow L_I$ holomorphic functions, such that $f_I(z)=F(z)+G(z)J$ for all $z\in B_I$. 
By the Fundamental Theorem of Calculus for holomorphic functions (see for instance Theorem 3.2.1 in book \cite{Lang}),
we get that
$$|z^{-1}F(z)|\leq||F'||_{B_I}  \quad \text{and} \quad |z^{-1}G(z)|\leq ||G'||_{B_I}$$
for all $z\in B_I$. Hence
\begin{equation}\label{dis2}
\begin{aligned}
|z^{-1}f(z)|^2&=|z^{-1}F(z)+z^{-1}G(z)J|^2=|z^{-1}F(z)|^2+|z^{-1}G(z)|^2\\
&\leq ||F'||^2_{B_I}+||G'||^2_{B_I}=||\partial_Cf||^2_I\\
&\leq\sup_{I\in\s}||\partial_Cf||^2_I=||\partial_Cf||^2.
\end{aligned}
\end{equation}
Since $||\partial_Cf||$ does not depend on $I$, we have that inequality \eqref{dis2} holds for every $q\in B$.
\end{proof}

\begin{oss}\label{media2}
As a consequence of Theorem \ref{media} and of the Maximum Modulus Principle \ref{PMM}, we get also that if $f$ is regular on $B(0,R)$, then for all $r\in (0,R)$
\begin{equation}
\sup_{q\in B(0,r)}|f(q)|\leq r||\partial_Cf||.
\end{equation}
\end{oss}

\noindent We conclude this section showing that, as the uniform norm, the norm $||\cdot||$ satisfies the following
\begin{pro}
Let $f$ be a regular function on $B=B(0,R)$. Then $||f||=||f^c||$.
\end{pro}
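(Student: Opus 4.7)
The plan is to compute $\|f^c\|_I$ for an arbitrary $I\in\mathbb{S}$ and compare it to $\|f\|_I$ directly via the splitting. This reduces everything to elementary invariance properties of the uniform norm on the disk $B_I$.

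First I would fix $I\in\mathbb{S}$ and an orthogonal $J\in\mathbb{S}$, and write the splitting of $f$ on $L_I$ as $f_I(z)=F(z)+G(z)J$, so that $\|f\|_I^2=\|F\|_{B_I}^2+\|G\|_{B_I}^2$ by definition. Now the crucial observation is Remark \ref{f^c}: with respect to the same $I$ and $J$, the regular conjugate has splitting
\[
f^c_I(z)=\overline{F(\overline{z})}-G(z)J.
\]
Thus, still with the same $J$,
\[
\|f^c\|_I^2=\bigl\|\overline{F(\overline{\cdot})}\bigr\|_{B_I}^2+\|-G\|_{B_I}^2.
\]

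The second step is to argue that each summand matches the corresponding summand of $\|f\|_I^2$. Clearly $\|-G\|_{B_I}=\|G\|_{B_I}$. For the other term, $B_I$ is a disk centred at the origin in $L_I$, hence invariant under the complex conjugation $z\mapsto\overline{z}$ of $L_I$; therefore
\[
\bigl\|\overline{F(\overline{\cdot})}\bigr\|_{B_I}=\sup_{z\in B_I}\bigl|\overline{F(\overline{z})}\bigr|=\sup_{z\in B_I}|F(\overline{z})|=\sup_{w\in B_I}|F(w)|=\|F\|_{B_I}.
\]
Combining these two equalities yields $\|f^c\|_I^2=\|F\|_{B_I}^2+\|G\|_{B_I}^2=\|f\|_I^2$.

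Finally, taking the supremum over $I\in\mathbb{S}$ gives $\|f^c\|=\|f\|$. I do not foresee any real obstacle here: the only point that needs a moment of care is to invoke Remark \ref{f^c} so that the splitting of $f^c$ is computed against the same pair $(I,J)$ used for $f$, and to use the symmetry of $B_I$ under conjugation in $L_I$ to kill the inner $z\mapsto\overline{z}$. Everything else is just the definition of $\|\cdot\|_I$ and taking a supremum.
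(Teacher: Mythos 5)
Your proposal is correct and follows essentially the same route as the paper: both invoke Remark \ref{f^c} to split $f^c_I(z)=\overline{F(\overline{z})}-G(z)J$ with respect to the same pair $(I,J)$, use the invariance of the disk $B_I$ under $z\mapsto\overline{z}$ together with $|\overline{w}|=|w|$ to get $\|\overline{F(\overline{\cdot})}\|_{B_I}=\|F\|_{B_I}$, and then take the supremum over $I\in\mathbb{S}$. No gaps.
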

\begin{proof} 
Let $I \in \s$. By Remark \ref{f^c}, if the splitting of $f_I$ is $f_I(z)=F(z)+G(z)J$ for all $z\in B_I$, then the regular conjugate of $f$ splits as 
$$f_I^c(z)=\overline{F(\overline{z})}-G(z)J.$$
Since the (complex) conjugation is a bijection of $B_I$ and the modulus $|\overline{F(z)}|$ is equal to $|F(z)|$ for all $z\in B_I$, we get that
\begin{equation}\label{normaf^c}
||f^c||_I^2=||\overline{F(\overline{z})}||^2_{B_I}+||-G||^2_{B_I}=||F||^2_{B_I}+||G||^2_{B_I}=||f||_I^2.
\end{equation}
Since equality \eqref{normaf^c} holds for every $I$ in $\s$, we can conclude.
\end{proof}

\section{The Bloch-Landau  type Theorem}
\noindent 
For $\varrho >0$, let us denote by $\mathcal{O}(\varrho)$ the open set 
\[
\mathcal{O}(\varrho)=\{q\in \HH \, | \, |q|^3 < \varrho |\RRe(q)|^2  \}.
\]
Notice that the intersection of $\mathcal{O}(\varrho)$ with a slice $L_I=\{x+yI \ | x,y \in \rr \}$ is the interior of an eight shaped curve with equation
\[
(x^2+y^2)^{\frac{3}{2}}=\varrho x^2.
\]
We remark that this curve always contains two discs with positive radius depending just on $\varrho$. For example, we can take the discs centred in $(\frac{\varrho}{2},0)$ and in $(-\frac{\varrho}{2},0)$, of radius $\frac{37}{256}\varrho^2$.
Therefore, the open set $\mathcal{O}(\varrho)$ always contains two open balls of radius at least $\frac{37}{256}\varrho^2$. 
\begin{lem}\label{lemma}
Let $f:B=B(0,R)\rightarrow \HH$ be a (non constant) regular function such that $f(0)=0$ and that $\de f(0) \in \rr$.
Then the image of $B$ under $f$ contains an open set of the form $\mathcal O(\varrho)$ 
where 
\[
\varrho=\frac{R|\partial_C f(0)|^2}{4||\partial_C f||}.
\]
\end{lem}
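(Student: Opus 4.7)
I begin by exploiting the hypothesis $\partial_C f(0)\in\rr$: replacing $f$ by $f/\partial_C f(0)$ (a real rescaling preserving slice regularity) reduces the statement to the case $\partial_C f(0)=1$, because $a_1\mathcal O(\sigma)=\mathcal O(|a_1|\sigma)$ for $a_1\in\rr$. The target in the normalized case is $f(\B)\supseteq\mathcal O(R/(4M))$ with $M=\|\partial_C f\|$. I fix $w\in\mathcal O(R/(4M))\setminus\{0\}$, write $w=u+vI_0$ with $u=\RRe w\neq 0$ and $I_0\in\s$, and look for $q\in\B$ satisfying $f(q)=w$.

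\textbf{Reduction to zeros of a slice-preserving function.} The natural tool is the symmetrization: set $h=f-w$, a regular function on $\B$, and consider $h^s=h*h^c$. Inspecting the power-series coefficients of $h^s$ as in Remark \ref{f^c} and Definition \ref{conj}, all coefficients lie in $\rr$, so $h^s$ is slice preserving. By the structure of the zero set of regular functions recalled in Section \ref{P}, a zero of $h^s$ at $z^*=x^*+y^*I_0\in\B$ produces a zero of $h=f-w$ on the sphere $x^*+|y^*|\s\subset\B$, and hence $w\in f(\B)$. It therefore suffices to find a zero of the classical holomorphic function $H:=h^s|_{\B_{I_0}}\colon\B_{I_0}\to L_{I_0}$, which has real Taylor coefficients.

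\textbf{Finding the zero via critical-point analysis.} Direct power-series computation yields $H(0)=|w|^2$, $H'(0)=-2u$, $H''(0)=2(1-2\RRe(w\bar a_2))$, and on the real axis $H(x)=|f(x)-w|^2\geq 0$. Remark \ref{media2} together with the Cauchy estimate $|a_n|\leq M/R^{n-1}$ produces the quantitative bound $|f(q)-q|\leq M|q|^2/(R-|q|)$, which forces $H(u)\approx v^2$ and keeps $H''(0)$ bounded below by a positive constant (because $|w|<R/(4M)$ gives $|2\RRe(w\bar a_2)|\leq 1/2$). Since $H'(0)\neq 0$, the real function $H|_\rr$ has a critical point $x^*$ close to $u$ at which $H(x^*)$ is still of order $v^2$. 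The Taylor coefficients of $H$ at the real point $x^*$ are real, so
\[
H(x^*+yI_0)=H(x^*)-\tfrac12 H''(x^*)y^2+O(y^3),
\]
and this vanishes for $y\approx \pm\sqrt{2H(x^*)/H''(x^*)}$, producing a candidate zero $z^*=x^*+yI_0\in L_{I_0}$ with $|z^*|\approx|w|$.

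\textbf{Main obstacle.} The technical heart of the proof is converting this candidate into an honest zero inside $\B_{I_0}$: one must dominate the cubic (and higher) remainder by the leading quadratic term $\tfrac12 H''(x^*)y^2$, and simultaneously control the errors in the approximations $x^*\approx u$, $H(x^*)\approx v^2$ and $H''(x^*)\approx 2$. Carrying out this balance precisely, while ensuring $|z^*|<R$ so that $z^*\in\B_{I_0}$, forces the condition $(u^2+v^2)^{3/2}<(R/(4M))u^2$, which is exactly the hypothesis $w\in\mathcal O(\varrho)$: the factor $(\RRe w)^2=u^2$ reflects the linear-drop coefficient $H'(0)=-2u$ that drives $H$ from $|w|^2$ down to $v^2$ along the real axis, while the cubic $|w|^3$ arises from balancing this decrease against the quadratic rise of $H$ in the $I_0$-direction. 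This delicate bookkeeping is the crux of the argument.
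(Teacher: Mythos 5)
Your approach is genuinely different from the paper's: you try to solve $f(q)=w$ directly by locating a zero of the symmetrization $(f-w)^s$ on the slice through $w$, whereas the paper argues by contraposition in the classical Landau style (if $c\notin f(B)$, then $g=(1-fc^{-1})^s$ is zero-free and slice preserving, one extracts a fourth root $\Psi$ with $\Psi(0)=1$ and $\de\Psi(0)=-\tfrac12\de f(0)\RRe(c^{-1})$, and the Parseval-type identity $\frac{1}{2\pi}\int_0^{2\pi}|\Psi(re^{I\theta})|^2d\theta=\sum_m\frac{r^{2m}}{(m!)^2}|\Psi^{(m)}(0)|^2\geq 1+r^2|\de\Psi(0)|^2$, combined with the upper bound $\sup|\Psi|^2\leq 1+|c|^{-1}\|\de f\|R$ coming from Theorem \ref{media}, yields exactly $|c|^3\geq\varrho|\RRe(c)|^2$). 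Your preliminary reductions are sound: the normalization is legitimate because $\de f(0)\in\rr$, the coefficients $H(0)=|w|^2$, $H'(0)=-2u$, $H''(0)=2(1-2\RRe(w\overline{a_2}))$ are correct, and a zero of $h^s$ at $x^*+y^*I_0\in B$ does give a solution of $f(q)=w$ on the sphere $x^*+|y^*|\s\subset B$.

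The problem is that the argument stops exactly where it would have to start, and what is missing is not mere bookkeeping. First, producing an actual zero of the holomorphic function $H$ from an approximate zero of its quadratic Taylor polynomial requires a two-dimensional argument: for real $y$, $H(x^*+yI_0)$ is an $L_{I_0}$-valued function whose real and $I_0$-components are two independent real power series in $y$, and a single real parameter cannot be expected to kill both (the odd-order terms beyond $H'(x^*)y$ do not vanish just because $x^*$ is a critical point). One must run Rouch\'e or the argument principle on a small disc around $\zeta=\pm I_0\sqrt{2H(x^*)/H''(x^*)}$ and dominate the cubic remainder of $H(x^*+\zeta)$ on its boundary; no such step, and none of the quantitative estimates it needs ($x^*$ close to $u$, $H(x^*)$ of order $v^2$, $H''$ bounded below on the relevant segment, remainder bounds, and $|z^*|<R$), appears in the write-up. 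Second, and decisively, there is no derivation that this scheme closes precisely under the hypothesis $|w|^3<\varrho|\RRe(w)|^2$ with $\varrho=R|\de f(0)|^2/(4\|\de f\|)$; the final paragraph asserts that the balance of errors ``forces'' exactly this condition, but that is the statement to be proved, not an observation, and it is far from clear that this route produces the constant $1/4$ rather than something worse. As written, the proposal establishes the lemma for no explicit value of $\varrho$.
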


\begin{proof} 
Since $f(0)=0$, if $\partial_Cf(0)=0$ there is nothing to prove. Suppose then that $\partial_Cf(0)\neq 0$.
Consider a point $c$ outside the image of $B$ under $f$, then $c\neq 0$. We want to show that $c$ does not belong to $\mathcal O(\varrho)$.

\noindent For all $q\in B$, define $g(q)$  to be
\[
g(q)=(1-f(q)c^{-1})^s.
\]
The function $g$ is regular on $B$ and we can estimate its modulus in the following manner:
let $\tau(q)$ be the transformation defined by 
$$\tau(q)=(1-f(q)c^{-1})^{-1}q(1-f(q)c^{-1}).$$ 
Then, according to Proposition \ref{trasf} and recalling that $|\tau(q)|=|q|$ for all $q$, we can write
\begin{equation*}
\begin{aligned}
|g(q)|&=|(1-f(q)c^{-1})*(1-f(q)c^{-1})^c|=|(1-f(q)c^{-1})||(1-f(\tau(q))c^{-1})^c|\\
&\leq \sup_{|q|< R}|(1-f(q)c^{-1})|\sup_{|q|< R}|(1-f(q)c^{-1})^c|.
\end{aligned}
\end{equation*}
Hence, using Proposition \ref{norma}, 
\begin{equation*}
|g(q)|\leq \left(\sup_{|q|< R}|(1-f(q)c^{-1})|\right)^2,
\end{equation*}
that is
\[
|g(q)|^{\frac{1}{2}}\leq \sup_{|q|< R}|(1-f(q)c^{-1})|.
\]
By the properties of the uniform norm and by Remark \ref{media2} we get then
\begin{equation}\label{|g|1/2}
|g(q)|^{\frac{1}{2}}\leq 1 + \sup_{|q|< R}|f(q)c^{-1}|\leq 1 +|c|^{-1} \sup_{|q|< R}|f(q)|\leq1 + |c|^{-1}||\de f|| R .
\end{equation}

\noindent The next step is to estimate from below the quantity $|g(q)|^{\frac{1}{2}}$. Notice that $g$ is slice preserving, since it is the symmetrization of a regular function. Moreover $g(q)$ is never zero. For all $I$ in  $\s$ the map $z \mapsto z^4$ from  $L_I\setminus \{0\} \rightarrow L_I \setminus \{0\}$ is a covering map. Since $B_I$ is simply connected, we can lift the function $g$ obtaining a holomorphic function $\Psi_I:B_I\rightarrow L_I \setminus \{0\}$ such that 
$$\Psi_I(z)^4=(1-f(z)c^{-1})^s$$
and $\Psi_I(0)=1$ (since $g(0)=1$).
Let $\Psi$ be the (unique) extension to $B$ of $\Psi_I$. 
Now $\Psi(0)=1$,
\begin{equation*}
\Psi(q)^4=(1-f(q)c^{-1})^s=g(q)
\end{equation*}
for all $q \in B$ and in particular 
\begin{equation}\label{radice2}
|\Psi(q)|^2=|g(q)|^{\frac{1}{2}} \quad \text{for all} \ \ q \in B.
\end{equation}
We want to use the power series expansion of $\Psi$ to find a lower bound of $|g|^{\frac{1}{2}}$. In particular we need to compute its slice derivative. Using the Leibniz rule \ref{Leib} we can calculate 
\begin{equation*}
\begin{aligned}
\partial_C g(q)&=\partial_C [(1-f(q)c^{-1})^s] =\partial_C [(1-f(q)c^{-1})*(1-f(q)c^{-1})^c]\\
&=-\partial_Cf(q)c^{-1}*(1-(f^c(q)c^{-1})^{c})- (1-f(q)c^{-1})* \partial_C (f(q)c^{-1})^{c}.
\end{aligned}
\end{equation*}
Since $q=0$ is a real zero of $f$ (and hence of $f^c$), and since the operators of slice differentiation and regular conjugation do commute, if the power series expansion of $f$ is $f(q)=\sum_{n=0}^{\infty}q^na_n$, we obtain
that 
\[
(\de f (q) c^{-1})^c=\sum_{n=1}^{\infty}q^{n-1}\overline{na_nc^{-1}}
\]  
and hence that
$$\partial_C g(0)=-\partial_C f(0)c^{-1}-\overline{\partial_C f(0)c^{-1}}= - 2 \RRe(\partial_C f(0)c^{-1}).$$
Moreover, since $g$ (and $\Psi$) is slice preserving, $g(0)=1$, and $\de f(0)$ is real, we have
\begin{equation}\label{|psi|2}
\partial_C \Psi(0)=\frac{1}{4}g(0)^{\frac{1}{4}-1}\partial_C g(0)= - \frac{1}{2}\RRe(\partial_C f(0)c^{-1})=- \frac{1}{2}\partial_C f(0)\RRe(c^{-1}).
\end{equation}

\noindent Let us set
\[
M:=1+ |c|^{-1}||\de f|| R.
\]
Fix $r\in(0,R)$ and $I\in \s$. Let $q \in \partial B_I(0,r)$, $q=re^{I\theta}$ for some $\theta \in [0,2\pi)$. By equations  \eqref{|g|1/2} and \eqref{radice2} we obtain that for every $\theta \in [0,2\pi)$
\begin{equation}\label{M}
M \geq |\Psi(re^{I\theta})|^2
\end{equation}

\noindent Using the series expansion of $\Psi$ we can write
\begin{equation}\label{|psi|}
\begin{aligned}
|\Psi(re^{I\theta})|^2&=\overline{\Psi(re^{I\theta})}\Psi(re^{I\theta})\\
&=\Bigg(\sum_{m=0}^{\infty}\frac{1}{m!}r^m\overline{\Psi^{(m)}(0)}e^{-Im\theta}\Bigg)\Bigg(\sum_{n=0}^{\infty}\frac{1}{n!}r^ne^{In\theta}\Psi^{(n)}(0)\Bigg)\\
&=\sum_{m,n=0}^{\infty}\frac{1}{m!n!}r^{m+n}\overline{\Psi^{(m)}(0)}e^{I(n-m)\theta}\Psi^{(n)}(0).\\
\end{aligned}
\end{equation}
If we integrate in $\theta$, then we get
\begin{equation*}
\frac{1}{2\pi}\int_0^{2\pi}|\Psi(re^{I\theta})|^2d\theta =\frac{1}{2\pi}\int_0^{2\pi}\sum_{m,n=0}^{\infty}\frac{1}{m!n!}r^{m+n}\overline{\Psi^{(m)}(0)}e^{I(n-m)\theta}\Psi^{(n)}(0) d\theta.
\end{equation*}

\noindent Thanks to the uniform convergence on compact sets of the series expansion, we can exchange the order of integration and summation. Then, since $\int_0^{2\pi}e^{Is\theta}d\theta=0$ if $s \in \mathbb{Z}, s\neq0$ and equals $2\pi$ otherwise, just the terms where $n=m$ survive and hence we get
\begin{equation*}
\frac{1}{2\pi}\int_0^{2\pi}|\Psi(re^{I\theta})|^2d\theta= \sum_{m=0}^{\infty}\frac{r^{2m}}{(m!)^2}\overline{\Psi^{(m)}(0)}\Psi^{(m)}(0)=\sum_{m=0}^{\infty}\frac{r^{2m}}{(m!)^2}|\Psi^{(m)}(0)|^2.
\end{equation*}
\noindent By inequality \eqref{M} and since $M$ is a constant, we obtain
\begin{equation*}
M=\frac{1}{2\pi}\int_0^{2\pi}M d\theta \geq \sum_{m=0}^{\infty}\frac{r^{2m}}{(m!)^2}|\Psi^{(m)}(0)|^2.
\end{equation*}
Considering just the first two terms of the series expansion and using equation \eqref{|psi|2}, we get
\begin{equation*}
M\geq 1+r^2  |\partial_C\Psi(0)|^2= 1+ \frac{r^2|\partial_C f(0)|^2|\RRe(c^{-1})|^2}{4}=1+\frac{r^2|\partial_C f(0)|^2|\RRe(c)|^2}{4|c|^4}.
\end{equation*}
Recalling the expression of $M$ we have then
\begin{equation}\label{M2}
1+ |c|^{-1}||\de f|| R \geq1+\frac{r^2|\partial_C f(0)|^2|\RRe(c)|^2}{4|c|^4},
\end{equation}
that is
\[
|c|^{3}\geq \frac{r^2|\partial_C f(0)|^2|\RRe(c)|^2}{4||\de f|| R }
\]
for all $r \in (0,R)$. Hence, if we take the limit as $r$ approaches $R$, we obtain
that if a point $c$ is outside the image $f(B)$, then it satisfies the following inequality
\[
|c|^{3}\geq \frac{R|\partial_C f(0)|^2|\RRe(c)|^2}{4||\de f|| }.
\]
That is equivalent to say that the image of $B$ under $f$ contains the open set
\[
\mathcal{O}(\varrho)=\{q\in \HH \ | \ |q|^3 < \varrho | \RRe (q)|^2\}.
\]
where 
\[
\varrho=\frac{R|\partial_C f(0)|^2}{4||\de f|| }.
\]
 \end{proof}

\noindent 
Let $w\in \HH$, and let $\tau_{w}$ be the translation $q \mapsto q+w$. The composition of a regular function with $\tau_w$ is not regular in general. For our purposes we need to define a new notion of composition in this special case. Notice that if we restrict both functions to the slice $L_I$ containing $w$, then we can compose them obtaining a holomorphic function.
\begin{defn}
Let $f$ be a regular function on $B=B(0,R)$ and let $w=x+yI \in B$. We define the {\em regular translation} $\tilde{f}_w$ of $f$ to be the unique regular extension of $f_I \circ (\tau_w)_I$,
\[\tilde{f}_w(q)= \ext( f_I \circ (\tau_w)_I)(q), \]
regular on $B(0,R-|w|)$.
\end{defn}
\noindent In the proof of the main result, it will be useful the following
\begin{pro}\label{convergenza}
Let $f$ be a regular function on $B=B(0,R)$ and let $w_n$ be a convergent sequence in $B$, such that $\lim_{n\to \infty}w_n=w \in B$. Set 
\[
m=\max \left\{ \{|w_n|,n\in \N\}\cup\{|w|\}\right\}
\]
Then 
$\tilde{f}_{w_n}$ converges to $\tilde{f}_w$ uniformly on compact subsets of $B(0,R-m)$.
\end{pro}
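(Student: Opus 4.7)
My plan is to expand $\tilde f_w$ explicitly as a power series in $q$ whose coefficients depend continuously on $w$, and then to combine a uniform geometric tail bound with termwise convergence of the head.

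First I would derive the expansion. Writing $f(q) = \sum_{n \geq 0} q^n a_n$ and picking $I \in \s$ with $w \in L_I$, both $z$ and $w$ lie in the commutative subfield $L_I$, so the binomial theorem applies and
\[
f_I(z+w) = \sum_{n \geq 0}(z+w)^n a_n = \sum_{k \geq 0} z^k\, b_k(w), \quad b_k(w) := \sum_{n \geq k}\binom{n}{k}\, w^{n-k}a_n,
\]
valid for $|z| < R - |w|$. The coefficients $b_k(w)$ are well defined in $\HH$ for every $w \in B$, independently of any slice choice. By uniqueness of the regular extension, $\tilde f_w(q) = \sum_{k \geq 0} q^k\, b_k(w)$, and the analogous formula holds for each $\tilde f_{w_n}$, even when $w_n$ and $w$ lie on different slices.

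Next I would establish a Cauchy-type estimate for the coefficients $a_n$: via the Splitting Lemma \ref{split} one writes $f_I = F + GJ$ with $F,G$ holomorphic on $B_I$, and the classical Cauchy inequalities together with Proposition \ref{rep} yield a bound of the form $|a_n| \leq C\, M_\rho/\rho^n$ for every $\rho \in (0,R)$, where $M_\rho := \sup_{|q|<\rho}|f(q)|$. Since $w_n \to w \in B$, the moduli $|w_n|$ are bounded with limit $|w| < R$, so $m < R$; I would then fix $\rho \in (m, R)$. Summing the binomial series gives the uniform estimate
\[
|b_k(v)| \leq \sum_{n \geq k}\binom{n}{k}\, m^{n-k}\,\frac{C\, M_\rho}{\rho^n} = \frac{C\, M_\rho\, \rho}{(\rho-m)^{k+1}}\quad \text{for all } v \in \HH \text{ with } |v| \leq m.
\]

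Finally, for a compact $K \subset B(0, R-m)$, I would choose $r < R - m$ with $K \subset \overline{B(0,r)}$ and then $\rho$ with $m < \rho < R$ and $r < \rho - m$, which guarantees $r/(\rho-m) < 1$. For $|q| \leq r$,
\[
|\tilde f_{w_n}(q) - \tilde f_w(q)| \leq \sum_{k \geq 0} r^k\, |b_k(w_n) - b_k(w)|.
\]
The tail $\sum_{k > N}$ is dominated by $\frac{2\,C\,M_\rho\,\rho}{\rho - m}\sum_{k > N}\bigl(\frac{r}{\rho-m}\bigr)^k$, uniformly in $n$, hence can be made arbitrarily small by taking $N$ large. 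For each fixed $k$, the map $v \mapsto b_k(v)$ is continuous on $\{|v|<R\}$ (dominated convergence on its defining series), so $b_k(w_n) \to b_k(w)$ and the finite head $\sum_{k \leq N}$ tends to zero. A standard $\varepsilon/2 + \varepsilon/2$ argument then yields the claimed uniform convergence on $K$. The main delicate point is the derivation of the power series expansion of $\tilde f_w$: one must choose a slice $L_I$ containing $w$ so that $z$ and $w$ commute and the binomial theorem applies, and recognise that the resulting coefficients $b_k(w)$ are intrinsic in $\HH$, so the same expression handles $w_n$ and $w$ simultaneously even if they lie on different slices.
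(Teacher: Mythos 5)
Your proof is correct, but it follows a genuinely different route from the paper's. The paper applies the Representation Formula (Theorem \ref{RF}) twice: it writes $\tilde f_{w_n}(x+yJ)$ in terms of the four values $f(x\pm yI_n+w_n)$, where $I_n=\IIm(w_n)/|\IIm(w_n)|$ (or $I_n$ arbitrary when the $w_n$ are eventually real), and then lets $w_n\to w$ and $I_n\to I$, concluding from the continuity of $f$ alone. You instead compute the Taylor coefficients of the regular translation explicitly, $\tilde f_v(q)=\sum_{k\ge 0} q^k b_k(v)$ with $b_k(v)=\sum_{n\ge k}\binom{n}{k}v^{n-k}a_n$, observe that this expression is intrinsic in $\HH$ and hence handles $w_n$ and $w$ on different slices simultaneously, and conclude via Cauchy estimates, a uniform geometric tail bound, and termwise convergence of the head. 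Your route is longer and more computational, but it is more self-contained precisely on the point the paper treats tersely (why the convergence is uniform on compacta rather than merely pointwise), and it yields an explicit formula for the coefficients of $\tilde f_w$ that is of independent interest. The key steps all check out: absolute convergence of the double series for $|z|+|v|<R$ justifies the rearrangement after the binomial expansion on the commutative slice $L_I$; the identity $\sum_{n\ge k}\binom{n}{k}m^{n-k}\rho^{-n}=\rho/(\rho-m)^{k+1}$ gives the stated coefficient bound; and since $r<R-m$ one can indeed pick $\rho\in(r+m,R)$ so that $r/(\rho-m)<1$. Note only that the finiteness $m<R$ deserves a word: the set $\{|w_n| \mid n\in\N\}\cup\{|w|\}$ is a convergent sequence together with its limit, hence compact in $[0,R)$, so the maximum is attained and is strictly less than $R$.
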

\begin{proof} 
Clearly the sequence $\tau_{w_n}$ converges (uniformly on compact sets) to $\tau_w$. Moreover, if $w_n \notin \mathbb{R}$ frequently, then (up to a subsequence) we define
\[I_n=\frac{\IIm(w_n)}{|\IIm(w_n)|}\quad \text{and} \quad \lim_{n\to \infty}I_n=I.\]
If, instead, there exists a natural number $n_0$ such that $w_n$ is real for all $n>n_0$ then we choose any $I\in \s$ and set $I_n=I$ for all $n>n_0$ in what follows.  In both cases, $I \in \s$ is such that $w\in L_I$.

\noindent 
By Theorem \ref{RF} we can write for all $q=x+yJ \in B(0,R-m)$
\begin{equation}
\begin{aligned}
&\tilde{f}_{w_n}(x+yJ)\\
&= \frac{1}{2}\left[ \tilde{f}_{w_n}(x+yI_n)+\tilde{f}_{w_n}(x-yI_n)\right]+\frac{JI_n}{2}\left[\tilde{f}_{w_n}(x-yI_n)-\tilde{f}_{w_n}(x+yI_n) \right]\\
&=\frac{1}{2}\left[ f_{I_n}(x+yI_n+w_n)+f_{I_n}(x-yI_n+w_n)\right]\\
& \hskip 4cm +\frac{JI_n}{2}\left[f_{I_n}(x-yI_n+w_n)-f_{I_n}(x+yI_n+w_n) \right]\\
&=\frac{1}{2}\left[ f(x+yI_n+w_n)+f(x-yI_n+w_n)\right]\\
& \hskip 4cm +\frac{JI_n}{2}\left[f(x-yI_n+w_n)-f(x+yI_n+w_n) \right].\\
\end{aligned}
\end{equation}
Since $f$ is a continuous function, again by Theorem \ref{RF} 
we can conclude that (uniformly on compact sets) 
\[
\lim_{n\to \infty}\tilde{f}_{w_n}(q)=\tilde{f}_w(q)
\]
for all $q\in B(0,R-m)$.
\end{proof}


\noindent In order to prove the Bloch-Landau type theorem we need a last step.
\begin{pro}\label{continua}
Let $f:B(0,R)\rightarrow \HH$ be a regular function. Then $M: [0,R) \rightarrow \rr$ \  defined as
\[
M(s)=\max_{|q|\leq s} |f(q)| 
\]
is a continuous function.
\end{pro}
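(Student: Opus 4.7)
The plan is to prove continuity by establishing both one-sided limits using only the compactness of closed balls and the continuity of $|f|$; no function-theoretic ingredient beyond these is required. First I observe that $M$ is monotone nondecreasing on $[0, R)$ because the closed balls $\overline{B(0, s)}$ are nested, and that $|f|$, being continuous on $B(0, R)$, attains its maximum on each compact set $\overline{B(0, s)}$ with $s < R$.

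To establish right continuity at $s_0 \in [0, R)$, I fix an intermediate radius $s_1$ with $s_0 < s_1 < R$ and consider an arbitrary sequence $s_n \searrow s_0$ that is eventually below $s_1$. Choosing $q_n \in \overline{B(0, s_n)}$ with $|f(q_n)| = M(s_n)$, the compactness of $\overline{B(0, s_1)}$ yields a subsequence $q_{n_k}$ converging to some $q^\star$ with $|q^\star| \leq s_0$. Continuity of $|f|$ gives $M(s_{n_k}) = |f(q_{n_k})| \to |f(q^\star)| \leq M(s_0)$, while monotonicity gives $M(s_n) \geq M(s_0)$, so $\lim M(s_n) = M(s_0)$. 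Since this holds for every such sequence, $M$ is right continuous at $s_0$.

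For left continuity at $s_0 \in (0, R)$, I exploit a simple rescaling. Pick $q_0 \in \overline{B(0, s_0)}$ with $|f(q_0)| = M(s_0)$, and for any sequence $s_n \nearrow s_0$ set $q_n := (s_n / s_0)\, q_0$. Then $|q_n| = (s_n/s_0)|q_0| \leq s_n$, so $M(s_n) \geq |f(q_n)|$, and $q_n \to q_0$ in $B(0, R)$, so by continuity of $|f|$ we have $|f(q_n)| \to M(s_0)$. Combined with the monotonic upper bound $M(s_n) \leq M(s_0)$ this yields $\lim M(s_n) = M(s_0)$. I do not expect any real obstacle in this proof; the argument is elementary, and the only mild subtlety is to note that the rescaling works regardless of whether $q_0$ is nonzero (if $q_0 = 0$, then $q_n \equiv 0 \in \overline{B(0, s_n)}$ still converges to $q_0$, and the inequality $M(s_n) \geq |f(0)| = M(s_0)$ follows directly).
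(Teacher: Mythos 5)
Your proof is correct, and it is built on the same basic skeleton as the paper's (monotonicity of $M$ plus separate one-sided limits obtained from compactness of closed balls and continuity of $|f|$), but it differs in one genuine respect: you never invoke the Maximum Modulus Principle, whereas the paper uses it twice --- first to assert that $M$ is increasing, and second to place the limit $q_0$ of the maximizers for $M(s+\frac1n)$ on the \emph{boundary} $\partial B(0,s)$, which is what allows the paper to approximate $q_0$ by points $p_n$ of modulus $s-\frac1n$ in the left-continuity step. You instead get monotonicity for free from the nesting of the balls, and you handle the left limit by radially rescaling a maximizer $q_0$ for $M(s_0)$ to $q_n=(s_n/s_0)q_0\in\overline{B(0,s_n)}$; this sidesteps the boundary issue entirely and also covers the constant case without a separate remark. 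The payoff is that your argument is purely topological and proves the statement for an arbitrary continuous $f:B(0,R)\to\HH$, with regularity playing no role; the paper's version is tied (inessentially) to the function theory. The only point worth tightening is in the right-continuity step: you exhibit a subsequence along which $M(s_{n_k})\to M(s_0)$, so you should add the one-line observation that $M(s_n)$ is itself a monotone (hence convergent) sequence, so that the subsequential limit determines the full limit.
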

\begin{proof} 
If $f$ is constant the statement is trivially true. Let us suppose then that $f$ is not constant. 
The Maximum Modulus Principle \ref{PMM} yields that the function $M(s)$ is increasing and hence for any sequence $s_n$ converging (from above or from below) to $s$ there exists the limit $\lim_{n\to\infty}M(s_n)$. To show that the limit is equal to $M(s)$, consider first the sequence $ \left\{ s+\frac{1}{n} \right\}_{n \in \N}$.
Since $ B (0,s+\frac{1}{n})$ is relatively compact, we can find $q_n$ such that $M(s+\frac{1}{n})=|f(q_n)|$ for all $n \in \mathbb{N}$, and, up to subsequences, we can suppose that $q_n$ converges to $q_0 \in \partial B (0,s)$. Therefore we have
\[
M(s)\leq \lim_{n\to \infty} M(s+\frac{1}{n})=\lim_{n \to \infty}|f(q_n)|=|f(q_0)|,
\]
where the last equality is due to the continuity of the function $|f(q)|$. Moreover, by definition of $M$ we have that $|f(q_0)|\leq M(s)$ and hence that
\[
M(s)= \lim_{n\to \infty} M(s+\frac{1}{n}).
\]
Now $q_0$ lies in the closure $\overline{B(0,s)}$, hence we can find a sequence whose term $p_n$ has modulus $|p_n|=s-\frac{1}{n}$ for all $n\in \mathbb{N}$, such that $\lim_{n\to \infty}p_n=q_0.$
Then
\[
M(s)\geq \lim_{n\to\infty} M(s-\frac{1}{n})\geq \lim_{n\to\infty}|f(p_n)|=|f(q_0)|=M(s).
\]
Therefore we can conclude that $M$ is continuous.
\end{proof}

\noindent Finally we have all the tools to prove the announced Bloch-Landau type theorem for regular functions.
Let $\B$ be the unit open ball of $\HH$, \  $\B=\{q \in \HH\  | \ |q|<1 \}.$

\begin{teo}[a Bloch-Landau type theorem]\label{BlochLandau}
Let $f:\B \rightarrow \HH$ \ be a regular function such that $f(0)=0$ and $\partial_Cf(0)=1$. Then there exists $u\in \B$ such that the image of the regular translation $\tilde{f}_u$ of $f$ contains an open set obtained by means of a rotation and a translation of $\mathcal{O}(\varrho)$, where the ``radius'' $\varrho$ is at least $\frac{1}{32\sqrt{2}}$.
\end{teo}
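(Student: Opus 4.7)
The plan mirrors the classical Bloch--Landau proof, adapted to the quaternionic setting via Lemma~\ref{lemma} and the regular translation construction. Set $M(r)=\max_{|q|\le r}|\partial_C f(q)|$ on $[0,1)$, which is continuous by Proposition~\ref{continua} applied to $\partial_C f$, and $h(r):=(1-r)M(r)$ with $h(0)=1$. I would choose $r_0\in[0,1)$ such that $h(r_0)\ge 1$ and $h\bigl(\tfrac{1+r_0}{2}\bigr)\le h(r_0)$ (the latter being equivalent to $M\bigl(\tfrac{1+r_0}{2}\bigr)\le 2M(r_0)$), pick $u\in\overline{B(0,r_0)}$ with $|\partial_C f(u)|=M(r_0)$, and set $R_0=(1-r_0)/2$ and $\lambda=\overline{\partial_C f(u)}/|\partial_C f(u)|$ (a unit quaternion). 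The regular function
\[
g(q)=\bigl(\tilde f_u(q)-f(u)\bigr)\lambda,\qquad q\in B(0,R_0)\subset B(0,1-|u|),
\]
then satisfies $g(0)=0$ and $\partial_C g(0)=\partial_C f(u)\lambda=|\partial_C f(u)|=M(r_0)>0$, meeting the hypotheses of Lemma~\ref{lemma}.

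To estimate the resulting $\varrho$, I use that on the slice $L_{I_u}$ containing $u$ one has $\partial_C\tilde f_u(z)=\partial_C f(z+u)$ and $|z+u|\le R_0+r_0=(1+r_0)/2$ for $z\in B_{I_u}(0,R_0)$, whence $|\partial_C\tilde f_u(z)|\le M((1+r_0)/2)\le 2M(r_0)$ by the choice of $r_0$; Proposition~\ref{rep} extends this to $|\partial_C\tilde f_u(q)|\le 4M(r_0)$ on all of $B(0,R_0)$, and since $|\lambda|=1$ the same pointwise bound holds for $|\partial_C g|$. Because the splitting components $\tilde F,\tilde G$ of $\partial_C g$ on any slice $L_I$ satisfy $|\tilde F|^2+|\tilde G|^2=|\partial_C g|^2$ pointwise, this yields $\|\partial_C g\|_I\le\sqrt 2\,\sup_{B(0,R_0)}|\partial_C g|\le 4\sqrt 2\,M(r_0)$ for every $I\in\s$, hence $\|\partial_C g\|\le 4\sqrt 2\,M(r_0)$. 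Lemma~\ref{lemma} now gives
\[
\varrho\ge\frac{R_0\,M(r_0)^2}{4\cdot 4\sqrt 2\,M(r_0)}=\frac{R_0\,M(r_0)}{16\sqrt 2}=\frac{h(r_0)}{32\sqrt 2}\ge\frac{1}{32\sqrt 2},
\]
and from $\tilde f_u(q)=g(q)\bar\lambda+f(u)$, combined with the fact that right multiplication by $\bar\lambda$ is a real-linear isometry of $\HH\simeq\mathbb R^4$, the image of $\tilde f_u$ contains a rotation and a translation of $\mathcal O(\varrho)$.

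I expect the main obstacle to be producing the $r_0$ above: the function $h$ is continuous on $[0,1)$ but need not attain its maximum there (it could be unbounded as $r\to 1^-$, or approach its supremum only asymptotically). If $h$ attains its maximum at some $r^*\in[0,1)$, I simply take $r_0=r^*$. Otherwise, I would rescale: for $\rho\in(0,1)$ put $f_\rho(q)=\rho^{-1}f(\rho q)$, which is regular on a neighbourhood of $\overline\B$, still satisfies $f_\rho(0)=0$ and $\partial_C f_\rho(0)=1$, and whose associated $h_\rho$ extends continuously to $[0,1]$ with $h_\rho(1)=0$, hence attains its maximum on $[0,1)$. Applying the previous argument to $f_\rho$ produces $u_\rho\in\B$ with $\mathcal O(\rho/(32\sqrt 2))$ (up to rotation and translation) inside the image of $\tilde{(f_\rho)}_{u_\rho}$; the identity $\tilde{(f_\rho)}_{u_\rho}(q)=\rho^{-1}\tilde f_{\rho u_\rho}(\rho q)$ transfers this to $\tilde f_{\rho u_\rho}$, and letting $\rho\to 1^-$ via a compactness argument combined with Proposition~\ref{convergenza} should yield the desired $u\in\B$. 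The delicate step is ensuring the selected subsequence produces a limit centre strictly inside $\B$ (so that $\tilde f_u$ has a nonempty domain) and that the limit of the rotated--translated open sets lies inside the image of $\tilde f_u$.
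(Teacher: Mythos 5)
Your proposal is correct and follows essentially the same route as the paper's proof: translate and normalize $f$ at a point where $|\partial_C f|$ is extremal relative to the distance to the boundary, bound $\|\partial_C\varphi\|$ by combining the slice estimate with Proposition \ref{rep} and Proposition \ref{equivalenza} (yielding the same factor $2\sqrt{2}$ and hence the same constant $\tfrac{1}{32\sqrt{2}}$), apply Lemma \ref{lemma}, and pass to a limit via Proposition \ref{convergenza}. The only real divergence is in selecting the base point: the paper fixes $r<1$, takes the \emph{minimal} $s$ with $sM(r-s)=r$ (which always exists on the compact interval $[0,r]$, so no attainment problem arises and no rescaling is needed), and then lets $r\to 1$; the ``delicate steps'' you flag at the end --- keeping the limit centre strictly inside $\B$ and transferring the open set to the image of the limit translation --- are precisely the points that the paper's own proof also treats only in passing.
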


\begin{proof} 
Let us set $M(t)$ to be the function defined on $[0,1)$ by 
\[ 
M(t)=\max_{|q|\leq t}|\de f (q)|,
\]
fix $r$ in $(0,1)$, and consider the function 
\[
\mu(s)=sM (r-s),
\]
defined for $s \in [0,r]$. By Proposition \ref{continua}, $\mu$ is a continuous function, $\mu(0)=0$, $\mu(s)\geq 0$ for all $s \in [0,r]$, and $\mu(r)=r$.
Set
\[
R=\frac{1}{2}\min\{s\ |\ \mu(s)=r\},
\]
then $0 < 2R\leq r$. Let $w \in \partial B (0,r-2R)$ be such that $|\partial_Cf(w)|=M(r-2R)$, i.e. by definition of $R$, such that $|\partial_Cf(w)|=\frac{r}{2R}$.
Let us restrict our attention to the slice $L_I$ containing $w$. Consider the function $\varphi_{I}:B(0,2R)\cap L_{I}\rightarrow \HH$, defined by
\[
\varphi_{I}(z)=\left( f(z+w)-f(w) \right)\frac{\overline{ \de f (w)}}{|\de f (w)|}.
\]
The function $\varphi_I$ is holomorphic on $B(0,2R) \cap L_I$, because 
\[
|q+w|\leq |q|+|w|\leq 2R + (r-2R)=r.
\]
Let $\varphi$ be the (unique) regular extension to the entire ball $B(0,2R)$ of $\varphi_I$. Then $\varphi(0)=0$ and $\partial_C \varphi(0)=|\partial_C f(w)|=\frac{r}{2R}$, hence $\varphi$ satisfies the hypotheses of Lemma \ref{lemma}.

\noindent For $z\in B(0,R)\cap L_I$ we have that
\[
|\partial_C \varphi_I(z)|=|\partial_C f(z+w)|\leq M (|z+w|) \leq M (r-R) =\frac{\mu(R)}{R}.
\]
Since $\mu$ is continuous, $\mu(0)=0$ and $\mu(r)=r$, then
\[
\frac{\mu(R)}{R}<\frac{r}{R},
\]
otherwise there would exists $s<2R$ such that $\mu(s)=r$, a contradiction with the definition of $R$.
Therefore
\[
\partial_C\varphi_I(B(0,R)\cap L_I) \subset B (0,\frac{r}{R}).
\]
Statement \eqref{dis1} of Proposition \ref{rep} implies then that
\[
\partial_C\varphi(B(0,R)) \subset B (0,\frac{2r}{R}).
\]
Considering the uniform norm we obtain 
\[
||\partial_C\varphi||_{B(0, R)}\leq \frac{2r}{R},
\]
 and hence, by Proposition \ref{equivalenza},  
\begin{equation}\label{ultima}
||\partial_C\varphi||\leq \frac{2\sqrt{2}r}{R}
\end{equation}
on $B(0,R)$. Lemma \ref{lemma} yields then that $\varphi(B(0,R))$ contains an open set $\mathcal{O}(\varrho)$ 
where 
\[
\varrho=\frac{R|\partial_C \varphi (0)|^2}{4||\partial_C \varphi ||}\geq \frac{R\left(\frac{r}{2R}\right)^2}{4\frac{2\sqrt{2}r}{R}} = \frac{r}{32 \sqrt{2}}.
\]
Recalling the definition of $\varphi$, we get then
\[
\tilde{f}_w(B(0,R))-f(w)\supset \mathcal{O}(\varrho(r))\frac{\partial_C\varphi(0)}{|\partial_C \varphi(0)|},
\]
that yields
\[
f(w)+\mathcal{O}(\varrho(r))\frac{\partial_C\varphi(0)}{|\partial_C\varphi(0)|}\subset \tilde{f}_w (B(0,R))
\]

\noindent Therefore for all $r<1$ there exist $R_r>0$ and $w_r$, with modulus $|w_r|=r-2R_r$, such that the image of $B(0,R_r)$ through $\tilde{f}_{w_r}$ contains the open set $f(w_r)+\mathcal{O}(\varrho(r))\frac{\partial_C\varphi(0)}{|\partial_C\varphi(0)|}$. 
When $r$ approaches 1, by compactness, we can find subsequences $\{R_n\}$ and $\{w_n\}$, converging respectively to $R_0>0$ and  to $w_0\in \mathbb{B}$ (in fact $R_0=0$ would imply that $\mu(0)=1$ which is not).
Thanks to Proposition \ref{convergenza} we have then that $\tilde{f}_{w_n}$ converges (uniformly on compact sets) to $\tilde{f}_{w_0}$, and hence we get that the image of $\tilde{f}_{w_0}$ contains the open set 
\[
\mathcal{O}(\varrho)\frac{\partial_C\varphi(0)}{|\partial_C\varphi(0)|} + f(w_0)   
\]
whose ``radius'' is at least
\[
\varrho=\lim_{r\to 1}\varrho(r)\geq \lim_{r \to 1}\frac{r}{32\sqrt{2}}=\frac{1}{32\sqrt{2}}.
\]
\end{proof}

\noindent It is easy to prove that if the regular translation $\tilde f_u$ that appears in the statement of Theorem \ref{BlochLandau} is a real translation (i.e. if  $u$ is real), then the universal set  $\mathcal{O}(\frac{1}{32\sqrt{2}})$ is contained in $f(\mathbb{B})$.

\noindent It seems to us that, in general,  there is  not a universal open set directly contained in the image $f(\mathbb{B})$ of a (normalized) slice regular function $f$. And this might be connected with the fact that, as proven in \cite{Duren}, the Bloch-Landau Theorem does not hold in $\mathbb{C}^2$ (and $\mathbb{C}^2$ and $\mathbb{H}$ are strictly related).
 
\noindent In any case the authors plan to further investigate this fascinating subject in the near future.



\begin{thebibliography}{88}
\bibitem{kernel} F. Colombo, G. Gentili, I. Sabadini, {\em  A Cauchy kernel for slice regular functions}, Ann. Global Anal. Geom., {\bf 37} (2010), 361-378.
\bibitem{libro daniele} F. Colombo, I. Sabadini, F. Sommen, D.C. Struppa, {\em Analysis of Dirac Systems and Computational Algebra}, Birkh\"auser, 2004.
\bibitem{libro2} F. Colombo, I. Sabadini, D. C. Struppa, {\em Noncommutative Functional Calculus. Theory and Applications of Slice Hyperholomorphic Functions}, Progress in Mathematics, Birkhauser, Basel, 2011.
\bibitem{ext}F. Colombo, G. Gentili, I. Sabadini, D. Struppa,  {\em Extension results for slice regular functions of a quaternionic variable}, Adv. Math., {\bf 222} (2009), 1793-1808. 
\bibitem{Cullen} C. G. Cullen, {\em An integral theorem for analytic intrinsic functions on quaternions}, Duke Math. J., {\bf 32} (1965), 139-148.
\bibitem{Duren}P. Duren, W. Rudin, {\em Distortion in Several Variables}, Complex Var. Theory Appl. {\bf 5} (1986), 323-326.
\bibitem{fueter1} R. Fueter, {\em Die Funktionentheorie der Differentialgleichungen $\Delta u = 0$ und $\Delta \Delta u = 0$ mit vier reellen Variablen}, Comm. Math. Helv., {\bf 7} (1934), 307-330.
\bibitem{fueter2} R. Fueter, {\em \"Uber Hartogs'schen Satz}, Comm. Math. Helv., {\bf 12} (1939), 75-80.
\bibitem{open}G. Gentili, C. Stoppato {\em The open mapping theorem for regular quaternionic functions}, Ann. Sc. Norm. Super. Pisa Cl. Sci. (5), {\bf 8} (2009), 805-815. 
\bibitem{zeri}G. Gentili, C. Stoppato {\em Zeros of regular functions and polynomials of a quaternionic variable}, Michigan Math. J., {\bf 56} (2008), 655-667. 
\bibitem{power} G. Gentili, C. Stoppato, {\em Power series and analyticity over the quaternions}. Math. Ann., {\bf 352} (2012), 113-131.
\bibitem{gs} G. Gentili, D.C. Struppa, {\em A new approach to Cullen-regular functions of a quaternionic variable},
 C.R. Acad. Sci. Paris, {\bf 342} (2006), 741-744.
\bibitem{GSAdvances} G. Gentili, D. C. Struppa, {\em A new theory of regular function of a quaternionic variable}, Adv. Math., {\bf 216} (2007), 279-301. 
\bibitem{weierstrass} G. Gentili, I. Vignozzi, {\em The Weierstrass Factorization Theorem for slice regular functions over the quaternions}, Ann. Global Anal. Geom.,  {\bf 40} (4) (2011), 435-466.
\bibitem{ghiloniperotti0} R. Ghiloni, A. Perotti, \emph{A new approach to slice regularity on real algebras}
in Hypercomplex Analysis and its Applications, I.Sabadini and F.Sommen eds., Trends in Mathematics, Birkhauser, Basel (2011), 109-124.
\bibitem{ghiloniperotti1} R. Ghiloni, A. Perotti, \emph{Slice regular functions on real alternative algebras}, Adv. Math., {\bf 226} (2) (2011), 1662-1691.
\bibitem{Morais}K. G\"urlebeck, J. Morais, \emph{Bloch's Theorem in the Context of Quaternion Analysis}, 
arXiv: 1201.0530v1
\bibitem{Landau} E. Landau, {\em \"Uber die Blochsche Konstante und zwei verwandte Weltkonstanten}, Math. Z., {\bf 30} (1) (1929), 608-634.
\bibitem{Landau2} E. Landau, {\em Darstellung und Begr\"undung einiger neuerer Ergebnisse der Funktionentheorie}, Springer Verlag, Berlin (1929).
\bibitem{Lang} S. Lang, {\em Complex Analysis}, Graduate Texts in Mathematics, Vol. 103, Springer-Verlag, New York (1999).
\bibitem{Sudbery} A. Sudbery,  {\em Quaternionic analysis}, Math. Proc. Camb. Phil. Soc., {\bf 85}, (1979), 199-225.
\end{thebibliography}
\end{document}